\newtheorem{Theorem}{Theorem}[section]
\newtheorem{Lemma}[Theorem]{Lemma}
\newtheorem{Corollary}[Theorem]{Corollary}
\newtheorem{Proposition}[Theorem]{Proposition}
\newtheorem{Remark}[Theorem]{Remark}
\newtheorem{Example}[Theorem]{Example}
\begin{document}
\title{Locating-Dominating sets in Hypergraphs}
\author{Muhammad Fazil, Imran Javaid$^{*}$, Muhammad Salman, Usman Ali}

\keywords{locating set, dominating set, locating-dominating set, hypergraph. \\
\indent 2010 {\it Mathematics Subject Classification.} 05C12, 05C65,
05C69.
\\
\indent $*$\ Corresponding author: ijavaidbzu@gmail.com}

\address{Centre for advanced studies in Pure and Applied Mathematics,
Bahauddin Zakariya University Multan, Pakistan. \newline E-mail:
mfazil@bzu.edu.pk, \{ijavaidbzu, solo33\}@gmail.com, uali@bzu.edu.pk}


\date{}
\maketitle
\begin{abstract}
A hypergraph is a generalization of a graph where edges can connect any number of
vertices. In this paper, we extend the study of locating-dominating sets to hypergraphs.
Along with some basic results, sharp bounds for the location-domination number of hypergraphs in
general and exact values with specified conditions are investigated. Moreover, locating-dominating sets in some specific hypergraphs are found.
\end{abstract}

\section{Introduction}
Let $G$ be a graph with vertex set $V(G)$ and edge set $E(G)$.
The number of elements in $V(G)$ and $E(G)$ is called the order and the
size of $G$, respectively. 
The {\it distance} between two vertices $u$ and $v$ in $G$, denoted
by $d(u,v)$, is the length of a shortest $u-v$ path in $G$. Let $u$
be a vertex of a graph $G$, then the \textit{open neighborhood} of
$u$ is $N(u)=\{v\in V(H)\ |\ uv\in E(G)\}$ and the \textit{closed
neighborhood} of $u$ is $N[u]=N(u)\cup \{u\}$. A set $D$ of
vertices of $G$ is a {\it dominating set} for $G$ if every vertex
$v$ in $V(G)- D$ has a neighbor in $D$, that is, for every
$v\in V(G)- D, N(v)\cap D\neq\emptyset$.

A set $\mathfrak{L}=\{x_1,\ldots,x_k\}$ of vertices of a graph $G$ is called a
{\it locating set} if for every two distinct vertices $u$ and $v$ of $G$, $(d(u,x_1),\ldots,d(u,x_k))\neq (d(v,x_1),\ldots,d(v,x_k))$. The
\emph{location number} (also called the {\it metric dimension} \cite{mel}) is
the minimum cardinality of a locating set of $G$ \cite{slater4}.

A set $S$ of vertices of a graph $G$ is called a
{\it locating-dominating set} if it is both the locating and
dominating set. An elaborate and more general definition is the
following: A set $S$ of vertices of $G$ is called a
locating-dominating set for $G$ if for every two distinct elements $u,v \in V(G)- S$, we have $\emptyset \neq N(u)\cap S\neq
N(v)\cap S\neq\emptyset$. The {\it location-domination number},
denoted by $\lambda(G)$, is the minimum cardinality of a locating-dominating
set of $G$ \cite{slater2}.


Locating-dominating sets in graphs were firstly studied by Slater
\cite{slater2}. The motivations of locating-dominating sets comes, for
instance, from fault diagnosis in multiprocessor systems. Such a
system can be modeled as a graph where vertices are processors and
edges are links between processors. A considerable literature has
been developed in this field (see \cite{ber,char1,col,fin,hon,rall,slater1,slater3}). The
decision problem for locating-dominating sets for directed graphs
has been shown to be an NP-complete problem \cite{char2}. In
\cite{cac}, it was pointed out that each locating-dominating set is
both the locating and dominating set. However, a set that is both the locating
and dominating is not necessarily a locating-dominating set.

A {\it hypergraph} $H$ is a pair $(V(H),E(H))$, where $V(H)$ is a
finite set of vertices and $E(H)$ is a finite family of non-empty
subsets of $V(H)$, called hyperedges, with $\bigcup\limits_{E\in
E(H)} E = V(H)$. A \emph{subhypergraph} $K$ of a hypergraph $H$ is a
hypergraph with vertex set $V(K)\subseteq V(H)$ and edge set
$E(K)\subseteq E(H)$. The {\it rank} of $H$, denoted by $rank(H)$,
is the maximum cardinality of a hyperedge in $E(H)$. $H$ is {\it
linear} if for distinct hyperedges $E_i$ and $E_j$, $|E_i\cap E_j|
\leq 1$, so in a linear hypergraph, there may be no repeated
hyperedges of cardinality greater than one. A hypergraph $H$ with no
hyperedge is a subset of any other hyperedge is called {\it Sperner}.

A vertex $v\in V(H)$ is said to be {\it incident} with a hyperedge $E$ of $H$
if $v\in E$. If $v$ is incident with exactly $n$ hyperedges, then we
say that the {\it degree} of $v$ is $n$; if all the vertices $v \in
V(H)$ have degree $n$, then $H$ is {\it n-regular}. The maximum
degree of any vertex in $H$ is denoted by $\Delta(H)$. Similarly, if
there are exactly $n$ vertices incident with a hyperedge $E$, then
we say that the size of $E$ is $n$; if all the hyperedges $E \in
E(H)$ have size $n$, then $H$ is {\it n-uniform}. A simple graph is
a $2$-uniform hypergraph.

A {\it path} from a vertex $v$ to another vertex $u$, in a
hypergraph, is a finite sequence of the form
$v,E_1,w_1,E_2,w_2,\ldots,E_{l-1}, w_{l-1},E_l,u$, having {\em length}
$l$ such that $v\in E_1,\; w_i\in E_i\cap E_{i+1}\; \mbox{for}\;
i=1,2,\ldots,{l-1}$ and $u\in E_l.$ A hypergraph $H$ is {\it
connected} if there is a path between every two vertices of $H$. {\it All the
hypergraphs considered in this paper are connected Sperner
hypergraphs}.

A hypergraph $H$ is said to be a {\it hyperstar} if $E_i\cap E_j =
\mathcal{C} \neq \emptyset$, for any $E_i,E_j \in E(H)$. We will
call $\mathcal{C}$, the {\it center} of the hyperstar. If there
exists a sequence of hyperedges $E_1,E_2,\ldots,E_k$ in a hypergraph
$H$, then $H$ is said to be (1)\ a {\it hyperpath} if $E_i\cap E_j
\neq \emptyset$ if and only if $|i-j| = 1$; (2)\ a {\it hypercycle}
if $E_i\cap E_j \neq \emptyset$ if and only if $|i-j| = 1$ (mod
$k$). A connected hypergraph $H$ with no hypercycle is called a {\it
hypertree}.

In graphs, the theory of dominating sets and locating-dominating sets is extensively studied. Hypergraphs, in the context of domination, were firstly considered by Behr and Camarinopoulos in 1998 \cite{behr}, and further considered by Acharya \cite{achar1,achar2} and Jose and Tuza \cite{jose}. In this paper, we consider hypergraphs in the context of location-domination. We give some sharp lower bounds for the location-domination
number of hypergraphs. Also, we investigate the location-domination number of
some well-known families of hypergraphs such as hyperpaths,
hypercycles and $k$-partite hypergraphs.

\section{Some Basic Results and Bounds}
Two vertices $u$ and $v$ of a hypergraph incident with the same hyperedge are said to be {\it coincident} vertices. Let
$E_{i}^{(d)}=\{E_{i_{1}}, E_{i_{2}},\ldots, E_{i_{d}}\}$ be a
collection of hyperedges. We denote the set of all
the vertices, having degree $d$, incident with every hyperedge in $E_{i}^{(d)}$ by $S_{i}^{(d)}$, and we call it
the $i$th {\it coincident set} of the vertices having
degree $d$. It should be noted that for each set
$E_{i}^{(d)}\subseteq E(H)$, there corresponds a coincident set $S^{(d)}_i$, which may be empty.

Coincident vertices have the same degree but two vertices having same
degree may not be coincident as in the following example
(illustrating the notion of the coincident set), the vertices $v_{6}$ and $v_{8}$
are not coincident although the degree of both the vertices is same.

\begin{Example}\label{exp2.1}Let $V(H)=\{v_{1},v_{2},\ldots,v_{10}\}$ and $E(H)=\{E_{1},
E_{2},E_{3}\}$, where $E_{1}=\{v_{1},v_{2},v_{3},v_{4}\}$,
$E_{2}=\{v_{1},v_{4},v_{5},v_{6},v_{7}\}$ and $E_{3}=\{v_{2},
v_{3},v_{4},v_{5},v_{8},v_{9},v_{10}\}$. Then we can write,
$E_{1}^{(1)}=\{E_{1}\},E_{2}^{(1)}=\{E_{2}\},E_{3}^{(1)}=\{E_{3}\},E_{1}^{(2)}=\{E_{1},E_{2}\},E_{2}^{(2)}=\{E_{1},E_{3}\}$,
$E_{3}^{(2)}=\{E_{2}, E_{3}\}, E_{1}^{(3)}=\{E_{1},E_{2},E_{3}\}$
and the corresponding coincident sets are:
$S_{1}^{(1)}=\emptyset, S_{2}^{(1)}=\{v_{6},v_{7}\},
S_{3}^{(1)}=\{v_{8},v_{9},v_{10}\} , S_{1}^{(2)}=\{v_{1}\},
S_{2}^{(2)}=\{v_{2},v_{3}\}, S_{3}^{(2)}=\{v_{5}\}$ and
$S_{1}^{(3)}=\{v_{4}\}$.
\end{Example}

\begin{Remark}\label{rem2.2} Two vertices belonging to different coincident sets can have the same closed
neighborhood. As in Example \ref{exp2.1}, the vertices $v_{4}$ and $v_{5}$ belong to
different coincident sets, namely $S_{1}^{(3)}$ and $S_{3}^{(2)}$,
respectively. However, $N[v_{4}]=N[v_{5}]=V(H)$.
\end{Remark}

Now, we discuss some properties of coincident sets in the following
proposition:

\begin{Proposition}\label{prop2.3}
(1) The set of all non-empty coincident sets in a hypergraph $H$ partitions $V(H)$.\\
(2) The number of non-empty coincident
sets in a hypergraph $H$ is bounded above by $\sum \limits_{d =
1}^{\Delta(H)} {m\choose d}$, where $m$ is the size of $H$.
\end{Proposition}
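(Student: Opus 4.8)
The plan is to read everything off the defining property of a coincident set: a vertex $v$ lies in $S_i^{(d)}$ exactly when $v$ has degree $d$ and is incident with every hyperedge of the $d$-element collection $E_i^{(d)}$. Since a vertex of degree $d$ is incident with exactly $d$ hyperedges, membership of $v$ in $S_i^{(d)}$ forces $E_i^{(d)}$ to be precisely the set of hyperedges containing $v$. This single observation drives both parts.

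For part (1), I would first establish covering. Because $\bigcup_{E \in E(H)} E = V(H)$, every vertex $v$ lies in at least one hyperedge and so has a well-defined degree $d \geq 1$; taking $E_i^{(d)}$ to be the collection of all hyperedges incident with $v$ places $v$ in the non-empty coincident set $S_i^{(d)}$. I would then establish disjointness: if $v \in S_i^{(d)} \cap S_j^{(d')}$, then $v$ has degree $d$ and degree $d'$, so $d = d'$, and both $E_i^{(d)}$ and $E_j^{(d)}$ must coincide with the unique $d$-element set of hyperedges incident with $v$, whence $E_i^{(d)} = E_j^{(d)}$ and $i = j$. Thus each vertex lies in exactly one coincident set, and the non-empty coincident sets partition $V(H)$.

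For part (2), I would count indexing collections. Each coincident set $S_i^{(d)}$ is determined by its indexing collection $E_i^{(d)}$, which is a $d$-element subset of the $m$ hyperedges of $H$; there are exactly $\binom{m}{d}$ such subsets. Since every vertex has degree between $1$ and $\Delta(H)$, only the values $d \in \{1, \ldots, \Delta(H)\}$ can yield a non-empty coincident set. Summing over these degrees gives that the number of non-empty coincident sets is at most $\sum_{d=1}^{\Delta(H)} \binom{m}{d}$.

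The argument is largely bookkeeping, so I expect no serious obstacle; the one point needing care, used in both parts, is the fact that a degree-$d$ vertex is incident with exactly $d$ hyperedges and therefore determines a unique indexing collection. This is what makes the assignment $v \mapsto E_i^{(d)}$ well defined, simultaneously yielding the disjointness in part (1) and the injectivity underlying the counting bound in part (2).
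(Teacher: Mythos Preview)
Your proposal is correct and follows essentially the same approach as the paper: both arguments hinge on the observation that a vertex of degree $d$ is incident with exactly $d$ hyperedges, so its indexing collection $E_i^{(d)}$ is uniquely determined, which yields both the partition in part~(1) and the counting bound in part~(2). Your disjointness argument is in fact cleaner than the paper's, which somewhat confusingly invokes the Sperner property; you instead appeal directly to the degree constraint, and you also explicitly handle the case $d \neq d'$ that the paper leaves implicit.
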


\begin{proof}$(1)$\ Note that $\cup S_{i}^{(d)}\subseteq V(H)$. Let $v\in V(H)$, then
$v\in E_{i_{l}}$ for some hyperedge $E_{i_{l}}\in E_{i}^{(d)}$. Since for each set $E_i^{(d)}$ of hyperedges, there
corresponds a coincident set $S_{i}^{(d)}$ yielding $v\in
S_{i}^{(d)}$ for some $i$. Hence, $V(H)\subseteq\cup
S_{i}^{(d)}$ implies $\cup S_{i}^{(d)}=V(H)$. Now, we show that $S_i^{(d)}\cap
S_{j}^{(d)}=\emptyset$ , or $S_i^{(d)}=
S_{j}^{(d)}$ for $i\neq j$.

Suppose that $S_i^{(d)}\cap S_{j}^{(d)}\neq \emptyset$ and let $v\in S_{i}^{^{(d)}}\cap S_{j}^{^{(d)}}\ (i\neq j)$. Then by
definition, $v$ will be incident with each hyperedge $E_{i_{l_{1}}}$ of
$E_{i}^{(d)}$ and $E_{j_{l_{2}}}$ of $E_{j}^{(d)}$, respectively.
This implies that $v\in E_{i_{l_{1}}}\cap E_{j_{l_{2}}}$. But, the
considered graph is connected Sperner hypergraph so $v\in
E_{i_{l_{1}}}\cap E_{j_{l_{2}}}$ holds only if $E_{i_{l_{1}}}=
E_{j_{l_{2}}}$, which shows that
$S_{i}^{^{(d)}}=S_{j}^{^{(d)}}$. It concludes the required result.

\noindent $(2)$\ By the definition of coincident set, for each
$E_{i}^{(d)}\subseteq E(H)$, there corresponds a coincident set
$S_{i}^{(d)}$. Also, since there are ${m\choose d}$ possible
subsets of $E(H)$ of cardinality $d$, so there will be at most as
many coincident sets for each $d$.
\end{proof}


If $N[u]=N[v]$ for any two vertices $u$ and $v$ of a hypergraph $H$, then for each
$S\subseteq V(H)$, $N(u)\cap S=N(v)\cap S$, which implies that either
$u$ or $v$ should belong to every locating-dominating set of $H$. Hence,
we have the following straightforward result:

\begin{Lemma}\label{lem2.4}
Let $S$ be a locating-dominating set for a hypergraph $H$. If for
any two distinct vertices $u$ and $v$ of $H$, $N[u]=N[v]$, then at
least one of $u$ and $v$ must belong to $S$.
\end{Lemma}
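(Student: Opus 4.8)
The plan is to argue by contradiction, using the defining property of a locating-dominating set directly. Suppose $S$ is a locating-dominating set for $H$ and that neither $u$ nor $v$ lies in $S$. Then $u$ and $v$ are two distinct elements of $V(H) - S$, so by the definition of a locating-dominating set we must have $N(u)\cap S \neq N(v)\cap S$. I would aim to contradict this by showing that, under the hypothesis $N[u]=N[v]$ together with $u,v\notin S$, one in fact has $N(u)\cap S = N(v)\cap S$.

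The key step is to compare the two open neighborhoods. Starting from $N[u]=N[v]$, i.e. $N(u)\cup\{u\} = N(v)\cup\{v\}$, and using that $u\notin N(u)$ while $u\neq v$, I would first deduce that $u\in N(v)$ and, symmetrically, $v\in N(u)$ (so $u$ and $v$ are coincident). Deleting $u$ from both sides of the closed-neighborhood identity then gives $N(u) = (N(v)\setminus\{u\})\cup\{v\}$; that is, $N(u)$ and $N(v)$ coincide except that $N(u)$ contains $v$ in place of $u$, and $N(v)$ contains $u$ in place of $v$.

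Now I would intersect with $S$. Since $v\notin S$, the extra term $\{v\}$ contributes nothing, and since $u\notin S$, deleting $u$ from $N(v)$ leaves $N(v)\cap S$ unchanged. Combining these observations yields $N(u)\cap S = N(v)\cap S$, contradicting the locating condition. Hence at least one of $u$ and $v$ must belong to $S$.

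The argument is short, and the only point requiring care is the bookkeeping between open and closed neighborhoods: one must verify that the \emph{only} discrepancies between $N(u)$ and $N(v)$ are the vertices $u$ and $v$ themselves. This is precisely where the assumption $u,v\notin S$ is used essentially, since it guarantees that these two discrepant vertices vanish upon intersecting with $S$; that observation is the crux of the lemma.
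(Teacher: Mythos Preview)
Your argument is correct and follows essentially the same route as the paper, which disposes of the lemma in the sentence preceding its statement by observing that $N[u]=N[v]$ forces $N(u)\cap S=N(v)\cap S$ (implicitly, when $u,v\notin S$). Your version is in fact more careful than the paper's one-line justification: you make explicit that $N(u)$ and $N(v)$ differ only at the vertices $u$ and $v$ themselves, which is exactly what is needed to conclude that intersecting with an $S$ avoiding both yields equal sets.
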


By the definition of coincident set $S_{i}^{(d)}$, for every two
distinct elements $u,v$ of $S_{i}^{(d)}$, $N[u]=N[v]$. Hence, by
Lemma \ref{lem2.4}, we have the following result:

\begin{Lemma}\label{lem2.5}
For any locating-dominating set $S$ of a hypergraph and for a non-empty coincident set
$S_{i}^{(d)}$, we have $|S|\geq |S\cap S_{i}^{(d)}|\geq|S_{i}^{(d)}|-1$.
\end{Lemma}


For a locating-dominating set $S$ of a hypergraphs $H$ and for a coincident set
$S_{i}^{(d)}$, we let $C_{i}^{(d)}=S\cap S_{i}^{(d)}$ whenever
$S_{i}^{(d)}\neq \emptyset$, and $C_{i}^{(d)}=\emptyset$ whenever
$S_{i}^{(d)}= \emptyset$. Moreover, we let
$C=\bigcup\limits_{d=1}^{m}\bigcup\limits_{i=1}^{m\choose
d}{C_{i}^{(d)}}$. Since $S_{i}^{(d)}=\emptyset$ when $d>\Delta(H)$,
therefore
$C=\bigcup\limits_{d=1}^{\Delta(H)}\bigcup\limits_{i=1}^{m\choose
d}{C_{i}^{(d)}}$.

A lower bound for the location-domination
number of a hypergraph $H$ is given in the following
result:

\begin{Theorem}\label{th2.6}
Let $S$ be a minimum locating-dominating set for a hypergraph
$H$ with $m\geq 2$ hyperedges. Then
$$ |S| \geq |C|=\sum \limits_{d=1}^{\Delta(H)}\sum \limits_{i=1}^{m\choose
d}{|C_{i}^{(d)}|}.$$
\end{Theorem}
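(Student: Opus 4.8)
The plan is to prove the two assertions in the displayed formula separately: the inequality $|S|\geq |C|$, and the identity $|C|=\sum_{d}\sum_{i}|C_i^{(d)}|$.

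First I would observe that the inequality is immediate from the way $C$ is built. By definition each $C_i^{(d)}$ equals $S\cap S_i^{(d)}$ (or $\emptyset$ when $S_i^{(d)}=\emptyset$), so $C_i^{(d)}\subseteq S$ for every index pair $(d,i)$. Taking the union over all $d$ and all $i$ gives $C\subseteq S$, whence $|S|\geq |C|$. No hypothesis beyond the definition of $C$ is needed at this stage; in particular the minimality of $S$ is not used here.

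The identity $|C|=\sum_{d}\sum_{i}|C_i^{(d)}|$ is the part that requires an argument, and its whole content is that the sets $C_i^{(d)}$ are pairwise disjoint, so that the cardinality of their union equals the sum of their cardinalities. Here I would invoke Proposition \ref{prop2.3}(1): the non-empty coincident sets $S_i^{(d)}$ partition $V(H)$ and are therefore pairwise disjoint. (Coincident sets of different degrees are automatically disjoint, since each vertex has a well-defined degree, while those of the same degree $d$ are disjoint by Proposition \ref{prop2.3}(1).) Since $C_i^{(d)}\subseteq S_i^{(d)}$, distinct non-empty coincident sets yield disjoint pieces $C_i^{(d)}$, and the empty coincident sets contribute $C_i^{(d)}=\emptyset$, hence nothing to either side. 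Summing the cardinalities over these disjoint pieces then gives the claimed equality.

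The main, and essentially only, obstacle is bookkeeping: one must make sure that the double sum over all index pairs $(d,i)$ does not over-count, that is, that each non-empty coincident set is counted exactly once. This is guaranteed because a vertex of degree $d$ is incident with a uniquely determined family of $d$ hyperedges, so it lies in exactly one coincident set $S_i^{(d)}$; thus the correspondence between index pairs with non-empty $S_i^{(d)}$ and the distinct parts of the partition is injective, and the disjointness supplied by Proposition \ref{prop2.3}(1) does the rest. Everything else is the routine passage from a disjoint union to a sum of cardinalities.
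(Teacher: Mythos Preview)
Your proof is correct and follows essentially the same approach as the paper: the identity is obtained from the pairwise disjointness of the coincident sets via Proposition~\ref{prop2.3}(1), and the inequality from the containment $C\subseteq S$. The only cosmetic difference is that the paper cites Lemma~\ref{lem2.5} for the inequality $|S|\geq |C|$, whereas you derive it directly from $C_i^{(d)}=S\cap S_i^{(d)}\subseteq S$; the two are equivalent.
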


\begin{proof}
Since non-empty coincident sets in a hypergraph $H$ form a partition of $V(H)$, by Proposition \ref{prop2.3}. So, every two coincident sets are either disjoint or equal, which yields that
$$|C|=\sum
\limits_{d=1}^{\Delta(H)}\sum \limits_{i=1}^{m\choose
d}{|C_{i}^{(d)}|}.$$
Further, Lemma \ref{lem2.5} straightforwardly concludes that $|S|\geq |C|$.
\end{proof}

The following result describes that the lower bound established above
is sharp.

\begin{Theorem}\label{th2.7}
Let $H$ be a hypergraph in which every hyperedge
contains at least two vertices of degree one. If $S$ is a minimum locating-dominating set for $H$, then
$$|S|=|C|=\sum
\limits_{d=1}^{\Delta(H)}\sum \limits_{i=1}^{m\choose
d}{|C_{i}^{(d)}|}.$$
\end{Theorem}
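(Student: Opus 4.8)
The plan is to prove sharpness by exhibiting an explicit locating-dominating set whose cardinality meets the lower bound of Theorem \ref{th2.6}. Since the non-empty coincident sets partition $V(H)$ by Proposition \ref{prop2.3}, the set $S$ is partitioned as $S=\bigsqcup (S\cap S_i^{(d)})$, and Lemma \ref{lem2.5} gives $|S\cap S_i^{(d)}|\geq |S_i^{(d)}|-1$ on each non-empty coincident set. Summing over all coincident sets yields
$$|S|=\sum_{d=1}^{\Delta(H)}\sum_{i=1}^{\binom{m}{d}}|S\cap S_i^{(d)}|\;\geq\;\sum_{S_i^{(d)}\neq\emptyset}\bigl(|S_i^{(d)}|-1\bigr).$$
Hence it suffices to build a locating-dominating set $T$ with $|T|=\sum_{S_i^{(d)}\neq\emptyset}(|S_i^{(d)}|-1)$; equality then propagates back through Lemma \ref{lem2.5}, forcing $|C_i^{(d)}|=|S_i^{(d)}|-1$ for every non-empty coincident set and giving the claimed identity $|S|=|C|=\sum|C_i^{(d)}|$.

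First I would form $T$ by deleting exactly one vertex from each non-empty coincident set $S_i^{(d)}$ (and deleting nothing from the empty ones); this is well defined and has cardinality exactly $\sum_{S_i^{(d)}\neq\emptyset}(|S_i^{(d)}|-1)$. It then remains to verify that $T$ is locating-dominating, and this is precisely where the hypothesis that every hyperedge carries at least two degree-one vertices is used. For \emph{domination}, let $u\in V(H)\setminus T$ lie in a hyperedge $E$. The degree-one vertices of $E$ all have degree one and are incident with exactly $\{E\}$, so they constitute a single coincident set; since $E$ contains at least two of them while $T$ omits only one per coincident set, at least one such vertex $w$ survives in $T$. As $w$ and $u$ share $E$ and $w\neq u$, we get $w\in N(u)\cap T$, so $N(u)\cap T\neq\emptyset$.

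For \emph{location}, take distinct $u,v\in V(H)\setminus T$. Because $T$ omits only one vertex from each coincident set, $u$ and $v$ cannot lie in the same coincident set, so the families of hyperedges incident with $u$ and with $v$ differ; pick a hyperedge $E$ containing, say, $u$ but not $v$. By the hypothesis $E$ carries at least two degree-one vertices, and as in the domination step at least one of them, $w$, lies in $T$. Then $w\in N(u)$ (both belong to $E$) while $w\notin N(v)$ (the degree-one vertex $w$ lies only in $E$, which misses $v$), and $w\neq u$ since $w\in T$. Thus $w$ witnesses $N(u)\cap T\neq N(v)\cap T$, with both intersections non-empty by the domination step. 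Hence $T$ is locating-dominating, giving $|S|\leq |T|$ and completing the argument.

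I expect the main obstacle to be the location step — specifically, guaranteeing that the separating hyperedge $E$ still contains a degree-one vertex lying \emph{inside} $T$ (rather than the one that was deleted) and that this vertex genuinely distinguishes $u$ from $v$. This is exactly the point at which the ``at least two degree-one vertices per hyperedge'' assumption is indispensable: with only a single degree-one vertex per hyperedge, that vertex could be the one removed when forming $T$, and no surviving witness would be available.
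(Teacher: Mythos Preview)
Your proposal is correct and follows essentially the same strategy as the paper: build a candidate set by removing one vertex from each non-empty coincident set and verify it is locating-dominating, so that the lower bound of Theorem~\ref{th2.6} is attained. Your write-up is in fact more careful than the paper's, since you explicitly check domination and spell out why the surviving degree-one vertex in the separating hyperedge witnesses $N(u)\cap T\neq N(v)\cap T$; the paper compresses this into a single sentence.
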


\begin{proof} By hypothesis, $C_{i}^{(1)}\neq \emptyset$ for each
$i=1,2,\ldots,m$. 
Let $u,v\in V(H)- C$. Since $|S_{i}^{(d)}|-|C_{i}^{(d)}|\leq
1$, there exist distinct coincident sets $S_{i}^{(d_1)}$ and
$S_{j}^{(d_2)}$ such that $u\in S_{i}^{(d_1)}$ and $v\in
S_{j}^{(d_2)}$. Assuming $d_1\geq d_2$, there exists a hyperedge $E_k$
such that $u\in E_k$ and $v\notin E_k $. Therefore $N(u)\cap C \neq
N(v)\cap C$, and hence, together with Theorem \ref{th2.6}, we have the required result.
\end{proof}

We give two examples which show that the condition in Theorem
\ref{th2.7} cannot be relaxed generally.

\begin{Example}\label{exp2.8}
Let $H$ be a hypergraph with vertex set
$V(H)=\{v_1,v_2,v_3,v_4,v_5\}$ and edge set
$E(H)=\left\{E_1=\{v_1,v_2,v_3,v_4\},E_2=\{v_4,v_5\}\right\}.$
Clearly, $C_{2}^{(1)}=\emptyset$ and the
condition of Theorem \ref{th2.7} is not satisfied. Observe that $C=\{v_1,v_2\}$. But, $C$ is
not a locating-dominating set for $H$ because $N(v_3)\cap C =
N(v_4)\cap C$.
\end{Example}

\begin{Example}\label{exp2.9}
Let $H$ be a hypergraph with vertex set
$V(H)=\{v_1,v_2,v_3,v_4,v_5,v_6\}$ and edge set
$E(H)=\left\{E_1=\{v_1,v_2,v_3,v_4\},E_2=\{v_3,v_4,v_5,v_6\},
E_3=\{v_1,v_2,v_5,v_6\}\right\}.$ Clearly, $ C_{i}^{(1)}=\emptyset$
but $C_{i}^{(2)}\neq \emptyset$, for all $i = 1,2,3$. Observe that
$C=\{v_1,v_3,v_5\}$. But, $C$ is not a locating-dominating set for
$H$ because $N(v_2)\cap C = N(v_4)\cap C$.
\end{Example}

A hypergraph $H$ is said to be a {\it complete hypergraph} if for all
$\{u,v\}\subseteq V(H)$, there is $E_{i}\in E(H)$ such that
$\{u,v\}\subseteq E_{i}$. A {\it clique} of a hypergraph $H$, denoted by
$\widetilde{H}=(V(\widetilde{H}),E(\widetilde{H}))$, is a complete
subhypergraph of $H$.

A sharp upper bound for the location-domination number
of a hypergraph $H$ is given in the following lemma:

\begin{Lemma}\label{lem2.18}
If $S$ is a locating-dominating set for a hypergraph
$H$ with $n$ vertices, then $\lambda(H)\leq |S| \leq n-1$, and this bound is
sharp.
\end{Lemma}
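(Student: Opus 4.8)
The plan is to prove both parts of the statement: first that every locating-dominating set $S$ satisfies $|S| \leq n-1$, and then that this bound is sharp by exhibiting a hypergraph achieving equality.

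For the upper bound, I would argue as follows. The key observation is that $V(H)$ itself is trivially a locating-dominating set in a vacuous sense, but I need something one smaller. First I would note that if $S = V(H)$, there are no vertices outside $S$, so the locating-dominating condition is vacuously satisfied; hence $V(H)$ is always a locating-dominating set. To get $n-1$, I would take any single vertex $w \in V(H)$ and set $S = V(H) \setminus \{w\}$. I then need to verify that $S$ is locating-dominating. Since there is exactly one vertex, namely $w$, outside $S$, the condition ``for every two distinct $u,v \in V(H) - S$ we have $\emptyset \neq N(u)\cap S \neq N(v)\cap S \neq \emptyset$'' requires only that $N(w) \cap S \neq \emptyset$ (the inequality between distinct elements is vacuous, as there is only one element outside $S$). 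The main obstacle here is precisely ensuring $N(w) \cap S \neq \emptyset$, i.e.\ that $w$ has at least one neighbor in $S$. Because $H$ is connected with $n \geq 2$ (implicitly, as a connected hypergraph on which domination is nontrivial), $w$ lies in some hyperedge $E$ containing another vertex, and that vertex lies in $S$; hence $N(w) \cap S \neq \emptyset$. This shows $V(H)\setminus\{w\}$ is locating-dominating, so $\lambda(H) \leq |S| = n-1$, and of course $\lambda(H) \leq |S|$ for the given $S$ by minimality of $\lambda$.

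\textbf{The sharpness} is the part requiring a concrete construction, and I expect it to be the genuinely nontrivial step. I would look for a hypergraph where \emph{every} proper subset of size $n-2$ fails to locate or dominate, forcing $\lambda(H) = n-1$. A natural candidate is a hypergraph with many mutually indistinguishable vertices: by Lemma \ref{lem2.4}, whenever $N[u]=N[v]$ at least one of $u,v$ must lie in any locating-dominating set. So if I build $H$ so that all $n$ vertices share a common closed neighborhood — for instance, a single hyperedge $E = V(H)$, or more carefully a structure where the coincident sets are as large as possible — then Lemma \ref{lem2.4} (or Lemma \ref{lem2.5}) forces almost all vertices into $S$. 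Concretely, for a hypergraph consisting of a single hyperedge on $n$ vertices, all $n$ vertices have identical closed neighborhood $V(H)$, so they all lie in one coincident set $S_i^{(d)}$ of size $n$; Lemma \ref{lem2.5} then gives $|S| \geq |S_i^{(d)}| - 1 = n-1$, matching the upper bound.

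\textbf{The main obstacle} I anticipate is handling the degenerate aspects of the single-hyperedge example (such as whether it is genuinely Sperner and connected, which it is) and confirming that the vacuous-locating subtlety is treated correctly: with only one vertex outside $S$, locating imposes no distinctness constraint, only the domination constraint $N(w)\cap S \neq \emptyset$, so I must be careful that the argument for the upper bound does not secretly demand more. I would therefore state the sharpness example explicitly, compute its coincident-set structure, and invoke Lemma \ref{lem2.5} to conclude $\lambda(H) = n-1$, thereby closing the gap between the bound and the example.
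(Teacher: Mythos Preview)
Your proposal is correct and follows essentially the same approach as the paper: the paper also observes that any $n-1$ vertices form a locating-dominating set, and for sharpness takes a complete hypergraph (your single-hyperedge example is a particular instance), using the fact that $N[u]=N[v]$ for all pairs to force $|S|\geq n-1$ via Lemma~\ref{lem2.4}. The only cosmetic difference is that you route the sharpness through Lemma~\ref{lem2.5} and the coincident-set language, whereas the paper argues directly from identical closed neighborhoods.
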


\begin{proof}
It is easy to see that any $n-1$ vertices of $H$ form a locating-dominating set $S$ for $H$, which implies $\lambda(H)\leq |S| \leq n-1$.

For sharpness, consider a complete hypergraph $H$ of order $n$. Since $N[u]=N[v]$ for every two distinct vertices $u$ and $v$ of $H$, so it never be hold that a set $S$ with $|S|< n-1$ forms a locating-dominating set for $H$. For otherwise, there exist $x,y\in V(H)- S$ such that $N(x)\cap S = N(y)\cap S$.
\end{proof}

\begin{Lemma}\label{lem2.19}
Let $\widetilde{H}$ be a clique of hypergraph $H$ and $S$ be a locating-dominating set for $H$.
If $S\subseteq V(\widetilde{H})$, then $|S|\geq |V(\widetilde{H})|-1$.
\end{Lemma}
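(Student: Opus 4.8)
The plan is to argue by contradiction and to exploit the defining feature of a clique: every pair of its vertices is coincident. First I would suppose, toward a contradiction, that $|S| \leq |V(\widetilde{H})| - 2$. Since $S \subseteq V(\widetilde{H})$ by hypothesis, the complement $V(\widetilde{H}) - S$ must then contain at least two distinct vertices, and I would fix two such vertices, call them $x$ and $y$.

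The crucial step is to observe that, because $\widetilde{H}$ is a complete subhypergraph, any two of its vertices lie in a common hyperedge and are therefore coincident. Applying this to each $s \in S \subseteq V(\widetilde{H})$ together with $x$ (respectively $y$), and noting that $s \neq x, y$ because $x, y \notin S$, I would conclude that every element of $S$ is a neighbor of both $x$ and $y$. Thus $S \subseteq N(x)$ and $S \subseteq N(y)$, whence $N(x) \cap S = S = N(y) \cap S$.

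This equality contradicts the locating-dominating condition, which demands $N(x) \cap S \neq N(y) \cap S$ for the two distinct vertices $x, y \in V(H) - S$. The contradiction rules out the possibility $|S| \leq |V(\widetilde{H})| - 2$, yielding the desired bound $|S| \geq |V(\widetilde{H})| - 1$.

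I expect the only delicate point to be the passage from completeness of the clique to the containment $S \subseteq N(x) \cap N(y)$; once this neighborhood statement is in hand, the contradiction is immediate. This is essentially the same mechanism that drives the sharpness argument in Lemma \ref{lem2.18}, where a complete hypergraph of order $n$ has every pair of vertices sharing the same closed neighborhood and hence forces any locating-dominating set to omit at most one vertex.
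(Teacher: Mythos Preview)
Your proof is correct and follows essentially the same approach as the paper: both exploit that all vertices of the clique are mutually coincident, so any two clique vertices outside $S$ have identical $S$-neighborhoods (namely all of $S$), violating the locating-dominating property. The paper phrases this via Lemma~\ref{lem2.4} by asserting that clique vertices ``have the same closed neighborhoods,'' whereas your direct computation $N(x)\cap S = S = N(y)\cap S$ is arguably cleaner, since it makes explicit use of the hypothesis $S\subseteq V(\widetilde{H})$ and does not require the (not-quite-true-in-$H$) claim that full closed neighborhoods coincide.
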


\begin{proof} Since all the vertices of $\widetilde{H}$ are mutually
coincident, therefore they have the same closed neighborhoods. Hence, by
Lemma \ref{lem2.4}, at least $|V(\widetilde{H})|-1$ elements from
$V(\widetilde{H})$ contained in $S$.
\end{proof}

A {\it vertex packing} in a hypergraph $H$ is a
subset $\mathcal{P}\subseteq V(H)$ such that no two elements of
$\mathcal{P}$ belong to the same hyperedge of $H$. The {\it packing
number} is the maximum cardinality of such a set $\mathcal{P}$, and we denote it by $\pi$ \cite{schei}.

\begin{Theorem}\label{th2.24}
Let $H$ be a linear hypergraph of order $n$ with $S_{i}^{(1)}\geq 2$ for all $i$. Then $\lambda(H)\leq n-\pi$ and this bound is sharp.
\end{Theorem}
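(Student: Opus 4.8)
The plan is to construct an explicit locating-dominating set of size $n-\pi$ by deleting a maximum packing. Fix a maximum vertex packing $\mathcal{P}\subseteq V(H)$ with $|\mathcal{P}|=\pi$ and put $S=V(H)\setminus\mathcal{P}$, so that $|S|=n-\pi$. Everything reduces to checking that $S$ is a locating-dominating set, for then $\lambda(H)\le|S|=n-\pi$. Two facts drive the verification: by hypothesis every hyperedge carries at least two degree-one vertices (the members of the sets $S_i^{(1)}$), and by the definition of a packing no hyperedge meets $\mathcal{P}$ in more than one vertex.

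First I would check domination. Given $u\in\mathcal{P}$, pick a hyperedge $E\ni u$. Since $E$ has at least two degree-one vertices but $|E\cap\mathcal{P}|\le 1$, there is a degree-one vertex $w\in E$ with $w\ne u$ and $w\in S$; as $u,w\in E$ this gives $w\in N(u)\cap S$, so $N(u)\cap S\ne\emptyset$. Next, the locating condition: let $u,v\in\mathcal{P}$ be distinct. Because $\mathcal{P}$ is a packing they lie in no common hyperedge, so choosing $E\ni u$ forces $v\notin E$. Taking the degree-one witness $w\in E\cap S$ with $w\ne u$ as above, we have $w\in N(u)\cap S$, whereas $w\notin N(v)$ since a degree-one vertex lies only in the single hyperedge $E$ and $v\notin E$. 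Hence $N(u)\cap S\ne N(v)\cap S$. Combining the two checks, $\emptyset\ne N(u)\cap S\ne N(v)\cap S\ne\emptyset$ for all distinct $u,v\in V(H)\setminus S=\mathcal{P}$, so $S$ is locating-dominating and $\lambda(H)\le n-\pi$.

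The remaining, and genuinely delicate, task is sharpness. The plan is to exhibit a connected linear Sperner hypergraph with $|S_i^{(1)}|\ge 2$ for which equality holds, computing $\lambda(H)$ exactly through Theorem~\ref{th2.7}: under the present hypothesis the minimum is attained by keeping all but one vertex of each non-empty coincident set, so $\lambda(H)=\sum_{d,i}\bigl(|S_i^{(d)}|-1\bigr)=n-N$, where $N$ is the number of non-empty coincident sets (which partition $V(H)$ by Proposition~\ref{prop2.3}). Equality with $n-\pi$ then amounts to realizing a maximum packing that selects exactly one representative from every coincident set, i.e. $\pi=N$.

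I expect this matching of $\pi$ with the coincident-set count to be the main obstacle. Since at most one vertex of any coincident set can enter a packing we always have $\pi\le N$, and the higher-degree coincident sets that connectivity forces are precisely those a packing has trouble reaching (each such set occupies several hyperedges and thereby conflicts with the degree-one sets sitting in those hyperedges). The sharpness example must therefore be engineered so that its incidence structure keeps $N$ from exceeding $\pi$; verifying this balance, rather than the upper bound itself, is where the real work lies.
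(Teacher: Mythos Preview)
Your upper-bound argument is the paper's argument, written out with more care. The paper simply asserts that for distinct $u,v\in\mathcal{P}$ one has $N(u)\ne N(v)$ and that therefore $V(H)\setminus\mathcal{P}$ is locating-dominating; you supply the degree-one witness that makes this precise. (A small simplification: since every neighbour of a packing vertex shares a hyperedge with it, the packing condition forces $N(u)\subseteq S$ for each $u\in\mathcal{P}$, so $N(u)\cap S=N(u)$ and both your checks collapse to showing $N(u)\ne\emptyset$ and $N(u)\ne N(v)$.)

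Where you leave a gap is sharpness: you set up a framework via Theorem~\ref{th2.7}, reduce equality to $\pi=N$ where $N$ is the number of non-empty coincident sets, and then stop, declaring this the ``main obstacle''. The paper dispatches sharpness in one line by taking a complete hypergraph---most concretely, a single hyperedge on all $n\ge 2$ vertices. This is linear, Sperner, connected, and has $|S_1^{(1)}|=n\ge 2$; its packing number is $\pi=1$ since all vertices lie in the unique hyperedge, and $\lambda(H)=n-1$ by Lemma~\ref{lem2.18}, so $\lambda(H)=n-\pi$. In your own language this example has exactly one non-empty coincident set, hence $N=1=\pi$, and the obstacle you anticipate never arises. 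No engineering is needed; the extremal example is the trivial one.
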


\begin{proof}
By the definition, for any two distinct $u,v\in\mathcal{P}$, we have $N(u)\neq N(v)$. So,
$V(H)-\mathcal{P}$ is a locating-dominating set for $H$. Hence
$\lambda(H)\leq n-\pi$ since $\pi$ is the largest size of packing $\mathcal{P}$. Further, the bound is sharp if $H$ is a complete hypergraph.
\end{proof}

\section{Location-Domination in Some Specific Hypergraphs}
Since in a uniform linear hypergraph, we have
$C_{i}^{(2)}= \emptyset$ and $C_{i}^{(1)}\neq \emptyset$ for all $i$. So, Theorem \ref{th2.7} yields the following consequence:

\begin{Corollary}\label{cor2.10}
Let $H$ be a $k$-uniform linear hypergraph with $m$ hyperedges.
Then for $k\geq 4$, $\lambda(H) =
\sum\limits_{i=1}^{m}|C^{(1)}_{i}|$.
\end{Corollary}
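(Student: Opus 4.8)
The plan is to derive the corollary as a specialization of Theorem \ref{th2.7}, so the only real tasks are to check that the hypotheses of that theorem are met and that the double sum collapses onto its $d=1$ layer. I would organize the argument as two structural observations followed by a one-line conclusion.

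First I would show that linearity forces every higher coincident layer to contribute nothing, i.e. $C_i^{(d)}=\emptyset$ for all $d\geq 2$, so that $C=\bigcup_{i=1}^{m}C_i^{(1)}$ and hence $|C|=\sum_{i=1}^{m}|C_i^{(1)}|$. The point is that if $v\in S_i^{(d)}$ with $d\geq 2$, then $v$ lies in at least two of the hyperedges of $E_i^{(d)}$, and since $H$ is linear their intersection is a single vertex; hence $|S_i^{(d)}|\leq 1$. A lone such vertex $v$ can be kept out of a minimum locating-dominating set: using that every hyperedge through $v$ carries a private degree-one witness in $C$, the set $N(v)\cap C$ records exactly which hyperedges contain $v$, and by linearity no other vertex lies in all of them, so $v$ is already located by $C$ without belonging to $S$. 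This yields $C_i^{(d)}=\emptyset$ for every $d\geq 2$, exactly as the sentence preceding the statement asserts.

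Second I would verify the hypothesis of Theorem \ref{th2.7}, that every hyperedge contains at least two vertices of degree one, equivalently $C_i^{(1)}\neq\emptyset$ for every $i$. This is where the $k$-uniform linear structure with $k\geq 4$ enters: each hyperedge has $k$ vertices, its members of degree $\geq 2$ are precisely the vertices it shares with other hyperedges, and the hypothesis is meant to leave at least two private degree-one vertices in each hyperedge, so that a minimum $S$ retains at least one of them in every $C_i^{(1)}$. Granting this, Theorem \ref{th2.7} applies verbatim and gives $\lambda(H)=|S|=|C|=\sum_{d=1}^{\Delta(H)}\sum_{i}|C_i^{(d)}|$, which by the collapse of the previous paragraph equals $\sum_{i=1}^{m}|C_i^{(1)}|$, as required.

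I expect this second step to be the crux. The reduction $C=\bigcup_i C_i^{(1)}$ follows cleanly from linearity together with the location argument above, whereas the nonemptiness $C_i^{(1)}\neq\emptyset$ is the substantive input: a hyperedge could a priori share all of its vertices with distinct neighbors, so two private degree-one vertices are not forced by $k$-uniformity and linearity alone. I would therefore make explicit the structural control on the sharing pattern that the hypothesis $k\geq 4$ is intended to supply, and state it precisely before invoking Theorem \ref{th2.7}, since everything else in the proof is formal.
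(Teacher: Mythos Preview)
Your approach is exactly the paper's: the paper's entire argument is the sentence preceding the statement, which asserts $C_i^{(2)}=\emptyset$ and $C_i^{(1)}\neq\emptyset$ for all $i$ and then invokes Theorem~\ref{th2.7}. You go further by attempting to justify those two assertions and, rightly, single out the second as the substantive one.

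Your suspicion is correct, and in fact the gap is fatal: $k$-uniformity and linearity with $k\geq 4$ do \emph{not} force every hyperedge to contain two vertices of degree one, and the corollary is false as stated. Take $E_0=\{v_1,v_2,v_3,v_4\}$ together with $E_i=\{v_i,w_{i1},w_{i2},w_{i3}\}$ for $i=1,2,3,4$. This hypergraph is $4$-uniform and linear, but $E_0$ has no degree-one vertex, so $S_0^{(1)}=\emptyset$ and the hypothesis of Theorem~\ref{th2.7} fails. A direct check gives $\lambda(H)=9$: Lemma~\ref{lem2.5} forces two of each triple $\{w_{i1},w_{i2},w_{i3}\}$ into $S$, and any such eight-element set leaves $N(v_i)\cap S=N(w_{i3})\cap S$, so a ninth vertex is needed. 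On the other hand $\sum_{i}|C_i^{(1)}|=8$ for any minimum $S$, contradicting the formula. Note that this also refutes your first step in general: every minimum $S$ here contains some $v_j$, so $C_j^{(2)}\neq\emptyset$; your location argument for excluding such a vertex tacitly used the very hypothesis that fails. The downstream Corollaries~\ref{cor2.11} and~\ref{cor2.12} survive because in a linear hyperpath or hypercycle each hyperedge meets at most two others, leaving at least $k-2\geq 2$ degree-one vertices; that bound on the number of neighbouring hyperedges is the ``structural control'' you were looking for, and it is what the general statement is missing.
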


\begin{Corollary}\label{cor2.11}
Let $H$ be a $k$-uniform linear hyperpath with
$m$ hyperedges. Then for $k\geq 4$, $\lambda(H)=m(k-3)+2$.
\end{Corollary}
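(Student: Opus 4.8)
The plan is to reduce everything to Corollary~\ref{cor2.10} by first pinning down the degree structure of a $k$-uniform linear hyperpath, and then counting degree-one vertices hyperedge by hyperedge.

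First I would establish the combinatorial skeleton of $H$. Writing the hyperedges as $E_1,\ldots,E_m$ with $E_i\cap E_j\neq\emptyset$ if and only if $|i-j|=1$, linearity forces $|E_i\cap E_{i+1}|=1$, so each consecutive pair meets in a single vertex. The key structural point is that these $m-1$ intersection vertices are pairwise distinct and each has degree exactly $2$: if some vertex lay in three hyperedges, or if one vertex served as the common vertex of both $\{E_{i-1},E_i\}$ and $\{E_i,E_{i+1}\}$, then $E_{i-1}\cap E_{i+1}\neq\emptyset$, contradicting the hyperpath condition $|i-j|=1$. Hence $H$ has exactly $m-1$ vertices of degree $2$ and all remaining vertices have degree $1$. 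Counting incidences (each degree-one vertex once, each degree-two vertex twice) gives $mk=n+(m-1)$, so $n=mk-(m-1)$ and therefore exactly $n-(m-1)=m(k-2)+2$ vertices of degree one.

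Next I would verify the hypotheses so that Corollary~\ref{cor2.10} applies to this hyperpath. Since each end hyperedge ($E_1$ and $E_m$) contains exactly one degree-two vertex and each interior hyperedge contains exactly two, the number of degree-one vertices of $E_i$ is $k-1$ for $i\in\{1,m\}$ and $k-2$ otherwise; for $k\geq 4$ this is at least $2$ in every hyperedge, so $S_i^{(1)}\neq\emptyset$ for all $i$ and $H$ is $k$-uniform and linear. Corollary~\ref{cor2.10} then yields $\lambda(H)=\sum_{i=1}^m|C_i^{(1)}|$, while Lemma~\ref{lem2.5} together with the construction behind Theorem~\ref{th2.7} identifies the minimizing choice $|C_i^{(1)}|=|S_i^{(1)}|-1$ (the degree-two coincident sets are singletons, so each contributes nothing to $C$).

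Finally, the computation is routine: each degree-one vertex lies in a unique $S_i^{(1)}$, so summing gives
$$\lambda(H)=\sum_{i=1}^m\bigl(|S_i^{(1)}|-1\bigr)=\Bigl(\sum_{i=1}^m|S_i^{(1)}|\Bigr)-m=\bigl(m(k-2)+2\bigr)-m=m(k-3)+2.$$
I expect the only genuine obstacle to be the structural step: one must argue carefully from linearity and the hyperpath definition that the shared vertices are distinct and of degree exactly $2$, since everything downstream—both the total count of degree-one vertices and the vanishing of the degree-two contribution to $C$—rests on that fact. The remainder is bookkeeping and a direct appeal to Corollary~\ref{cor2.10}.
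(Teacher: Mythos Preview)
Your approach is essentially the same as the paper's: both reduce to Corollary~\ref{cor2.10} and compute $\sum_{i=1}^m |C_i^{(1)}|$ by observing that the end hyperedges contribute $k-2$ each and the interior hyperedges contribute $k-3$ each. The paper simply asserts the values $|C_1^{(1)}|=|C_m^{(1)}|=k-2$ and $|C_i^{(1)}|=k-3$ for $2\le i\le m-1$ and sums, while you supply the structural justification (distinct degree-two intersection vertices, the incidence count) that the paper leaves implicit.
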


\begin{proof} Observe that $|C_{1}^{(1)}|=|C_{m}^{(1)}|=k-2$,
whereas for $2\leq i\leq m-1$, $|C_{i}^{(1)}|=k-3$ hence, we have
$\lambda(H)=m(k-3)+2$.
\end{proof}

\begin{Corollary}\label{cor2.12}
Let $H$ be a $k$-uniform linear hypercycle with $m$
hyperedges. Then for $k\geq 4$, $\lambda(H)=m(k-3)$.
\end{Corollary}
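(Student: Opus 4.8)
The plan is to reduce everything to Corollary \ref{cor2.10}, which tells us that for a $k$-uniform linear hypergraph with $k\geq 4$ we have $\lambda(H)=\sum_{i=1}^{m}|C_i^{(1)}|$. Thus it suffices to determine, for each hyperedge, how many degree-one vertices it carries. I would first record the local structure forced by the hypercycle definition: writing the hyperedges as $E_1,\dots,E_m$ with $E_i\cap E_j\neq\emptyset$ exactly when $i,j$ are cyclically adjacent, every $E_i$ meets precisely its two cyclic neighbours $E_{i-1}$ and $E_{i+1}$ (indices mod $m$), and by linearity each such meeting consists of a single vertex. Hence $E_i$ carries at most two vertices of degree $\geq 2$, namely the vertex shared with $E_{i-1}$ and the vertex shared with $E_{i+1}$.

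The crux is to show these two shared vertices are genuinely distinct, so that $E_i$ has exactly two vertices of degree two and exactly $k-2$ vertices of degree one. If they coincided, the common vertex would lie in $E_{i-1}\cap E_{i+1}$; but for $m\geq 4$ the edges $E_{i-1}$ and $E_{i+1}$ are not cyclically adjacent and so are disjoint by the hypercycle definition, a contradiction. This gives $|S_i^{(1)}|=k-2$ for every $i$ (in particular $\geq 2$ since $k\geq 4$, so the hypothesis underlying Theorem \ref{th2.7} and Corollary \ref{cor2.10} is satisfied). As in the minimum locating-dominating set produced by Theorem \ref{th2.7} one retains all but one degree-one vertex on each edge and no degree-two vertex, Lemma \ref{lem2.5} then yields $|C_i^{(1)}|=|S_i^{(1)}|-1=k-3$, exactly parallel to the interior-edge count in Corollary \ref{cor2.11}.

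Summing over the $m$ hyperedges gives $\lambda(H)=\sum_{i=1}^{m}|C_i^{(1)}|=m(k-3)$, as claimed. The step I expect to require the most care is the distinctness argument together with the degenerate small case: for $m=3$ all three hyperedges are pairwise cyclically adjacent, so $E_{i-1}$ and $E_{i+1}$ need not be disjoint and the two shared vertices could collapse into a single degree-three vertex, which would raise each count from $k-2$ to $k-1$ and break the formula. I would therefore either assume $m\geq 4$ or handle $m\leq 3$ separately; beyond that, the computation is the routine bookkeeping already validated against the hyperpath case, the only structural difference being that a cycle has no "end" edges and hence every edge contributes the interior value $k-3$ rather than the boundary value $k-2$.
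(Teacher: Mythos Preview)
Your argument follows exactly the paper's route: invoke Corollary~\ref{cor2.10} and compute $|C_i^{(1)}|$ for each hyperedge. The paper's proof is a single line asserting $|C_i^{(1)}|=k-3$ for all $i$ and summing; you supply the structural justification the paper omits (linearity forces each $E_i$ to meet its two cyclic neighbours in single vertices, and for $m\geq 4$ those two vertices are distinct because $E_{i-1}\cap E_{i+1}=\emptyset$, leaving $|S_i^{(1)}|=k-2$ and hence $|C_i^{(1)}|=k-3$).

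Your caution about $m=3$ is well placed and in fact flags a genuine gap in the paper rather than in your reasoning. For $m=3$ all three pairs of hyperedges are cyclically adjacent, so the definition does not force the three pairwise intersections to be distinct; a linear $3$-edge configuration with $E_1\cap E_2=E_2\cap E_3=E_1\cap E_3=\{v\}$ satisfies the hypercycle definition verbatim, yet each edge then has $k-1$ degree-one vertices and the formula would give $3(k-2)$ instead of $3(k-3)$. The paper's proof silently assumes the non-degenerate situation. Your plan to restrict to $m\geq 4$ (or to treat $m=3$ separately under the additional assumption that the three intersection vertices are distinct) is the right fix.
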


\begin{proof}
Note that, for every $k$-uniform linear hypercycle
$|C_{i}^{(1)}|=k-3$, for all $1\leq i\leq m$. Therefore
$\lambda(H)=m(k-3)$.
\end{proof}

\begin{Theorem}\label{th2.13}
In a $k$-uniform hypercycle $C_{m,k}\ (k\geq 4)$ with
$m\geq 3$ hyperedges, if $C_{i}^{(1)}=\emptyset$ for all
$i$ and the order of each non-empty coincident set $S_{i}^{(2)}$ is same, then
$\lambda(C_{m,k})=m(\lfloor\frac{k}{2}\rfloor-1).$
\end{Theorem}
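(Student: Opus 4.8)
The plan is to establish the value by proving matching lower and upper bounds, using the coincident-set machinery of Lemma~\ref{lem2.5} and Theorem~\ref{th2.6} for the lower bound and an explicit deletion construction for the upper bound. First I would pin down the structure forced by the hypotheses. Since $C_{m,k}$ is a $k$-uniform hypercycle, consecutive hyperedges $E_i,E_{i+1}$ (indices taken mod $m$) meet while non-consecutive ones are disjoint, so the only non-empty degree-two coincident sets are the $m$ intersections $S_i^{(2)}=E_i\cap E_{i+1}$, and for $m\geq 4$ consecutive intersections are themselves disjoint (a common vertex would lie in three hyperedges). The hypothesis $C_i^{(1)}=\emptyset$ says a minimum locating-dominating set $S$ uses no degree-one vertex, and the assumption that every non-empty $S_i^{(2)}$ has a common order $s$, together with $k$-uniformity, forces each intersection to have size $s=\lfloor k/2\rfloor$ (a single permitted degree-one vertex per hyperedge being absorbed harmlessly when $k$ is odd). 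Thus the relevant vertices are partitioned into $m$ coincident sets, each of size $\lfloor k/2\rfloor$.

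For the lower bound I would apply Lemma~\ref{lem2.5} to each non-empty coincident set, giving $|S\cap S_i^{(2)}|\geq |S_i^{(2)}|-1=\lfloor k/2\rfloor-1$, and then sum over $i$. Since by Proposition~\ref{prop2.3} these coincident sets are pairwise disjoint, Theorem~\ref{th2.6} yields
\[
\lambda(C_{m,k})=|S|\;\geq\;\sum_{i=1}^{m}\bigl(|S_i^{(2)}|-1\bigr)\;=\;m\left(\left\lfloor\frac{k}{2}\right\rfloor-1\right).
\]

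For the upper bound I would exhibit a locating-dominating set meeting this value. Choose one vertex $w_i$ from each $S_i^{(2)}$ and set $S=V(C_{m,k})\setminus\{w_1,\ldots,w_m\}$, so that $|S|=m(\lfloor k/2\rfloor-1)$ and $S$ retains $\lfloor k/2\rfloor-1\geq 1$ vertices (using $k\geq 4$) from each intersection. Domination is immediate: each $w_i$ lies in $E_i\cap E_{i+1}$, so a surviving vertex of that same intersection is a neighbor of $w_i$ in $S$. For location, writing $A_j=S\cap S_j^{(2)}$ and using $E_i=S_{i-1}^{(2)}\cup S_i^{(2)}$, a direct computation gives $N(w_i)\cap S=A_{i-1}\cup A_i\cup A_{i+1}$; since the $A_j$ are non-empty and pairwise disjoint, this union recovers the index triple $\{i-1,i,i+1\}$, and distinct centers give distinct triples. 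The main obstacle is precisely this final distinctness step: for small cycle lengths the three-term windows $\{i-1,i,i+1\}$ can coincide (most sharply at $m=3$, where every window equals $\{1,2,3\}$ so all deleted vertices receive identical traces and the construction collapses), so verifying the locating property, handling the smallest values of $m$ separately, and carrying out the bookkeeping for the degree-one vertices that appear when $k$ is odd are where the argument requires genuine care.
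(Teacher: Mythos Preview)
Your lower-bound argument via Lemma~\ref{lem2.5} and Theorem~\ref{th2.6} is exactly the paper's (implicit) reasoning: the paper's entire proof is the two-sentence observation that, under the hypotheses, each $C_i^{(2)}$ contains $\lfloor k/2\rfloor-1$ vertices and there are $m$ such sets, so $\lambda(C_{m,k})=m(\lfloor k/2\rfloor-1)$. The paper gives no explicit upper-bound construction and no verification that the resulting set $C$ is actually locating-dominating; your deletion construction and the neighborhood computation $N(w_i)\cap S=A_{i-1}\cup A_i\cup A_{i+1}$ supply precisely what the paper omits.

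Your suspicion about $m=3$ is not merely a place requiring ``care'' --- it is a genuine counterexample to the theorem as stated, and hence to the paper's proof. Take $k=4$, $m=3$, with $E_1=\{a,b,e,f\}$, $E_2=\{a,b,c,d\}$, $E_3=\{c,d,e,f\}$. Here every $S_i^{(1)}$ is empty (so $C_i^{(1)}=\emptyset$ trivially) and each $S_i^{(2)}$ has order~$2$, so the hypotheses hold and the theorem predicts $\lambda=3$. But every vertex $v$ satisfies $N[v]=V(H)$, so by Lemma~\ref{lem2.18} one has $\lambda=5$. Thus for $m\ge 4$ your argument is complete and more rigorous than the paper's, while for $m=3$ neither your construction nor the paper's assertion can be salvaged: the claimed equality is simply false there.
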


\begin{proof} Since $C_{i}^{(1)}=\emptyset$ for all $i$ and each non-empty coincident set
$S_{i}^{(2)}$ has the same order, so each $C_{i}^{(2)}$ has exactly
$\lfloor\frac{k}{2}\rfloor-1$ elements. Since there are $m$ hyperedges, therefore $\lambda(C_{m,k})=m(\lfloor\frac{k}{2}\rfloor-1).$
\end{proof}

The following result for hyperpaths also shows that the lower bound established in Theorem \ref{th2.6}
is sharp.

\begin{Theorem}\label{th2.14}
Let $H$ be a hyperpath with $m$ hyperedges. If $S_{i}^{(1)}$, for $i=1,m$, and each non-empty coincident set
$S_{i}^{(2)}$ has at least two elements, then
$$\lambda(H)=\sum
\limits_{d=1}^{\Delta(H)}\sum \limits_{i=1}^{m\choose
d}{|C_{i}^{(d)}|}.$$
\end{Theorem}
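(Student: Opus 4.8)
The plan is to prove the reverse of the inequality in Theorem \ref{th2.6}: since that result already gives $\lambda(H)\ge \sum_{d}\sum_{i}|C_{i}^{(d)}|$, it suffices to exhibit a locating-dominating set of exactly this size, and equality will follow. First I would pin down the structure of coincident sets in a hyperpath. Because $E_i\cap E_j\neq\emptyset$ precisely when $|i-j|=1$, no vertex can lie in three hyperedges (that would force $E_{i-1}\cap E_{i+1}\neq\emptyset$), so $\Delta(H)=2$; hence the only non-empty coincident sets are the private degree-one sets $S_i^{(1)}\subseteq E_i$ for $1\le i\le m$ and the overlap sets $S_i^{(2)}=E_i\cap E_{i+1}$ of consecutive hyperedges, and each $E_i$ decomposes as $E_i=S_i^{(1)}\cup S_{i-1}^{(2)}\cup S_i^{(2)}$. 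The candidate set $C$ is then obtained from $V(H)$ by deleting one representative vertex from each non-empty coincident set, so that $V(H)-C$ holds exactly one vertex per non-empty coincident set and $|C|=\sum_{d}\sum_{i}|C_i^{(d)}|$ by the partition property of Proposition \ref{prop2.3}.

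Next I would check that $C$ dominates $H$. The hypotheses, together with the fact that every overlap $S_i^{(2)}$ is non-empty in a hyperpath, give $C_1^{(1)}\neq\emptyset$, $C_m^{(1)}\neq\emptyset$, and $C_i^{(2)}\neq\emptyset$ for every $i$. For a deleted degree-two vertex $u\in S_i^{(2)}$ the surviving elements of $C_i^{(2)}$ are coincident with $u$ and lie in $C$, so $N(u)\cap C\neq\emptyset$. For a deleted degree-one vertex $u\in S_i^{(1)}$ at an end hyperedge, $C_i^{(1)}\neq\emptyset$ supplies a neighbour in $C$; for an interior $E_i$ whose private set is a singleton, $N(u)=S_{i-1}^{(2)}\cup S_i^{(2)}$ and both flanking overlaps contribute vertices of $C$, so again $N(u)\cap C\neq\emptyset$.

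The heart of the argument, and the step I expect to be the main obstacle, is the location condition: for distinct $u,v\in V(H)-C$ I must show $N(u)\cap C\neq N(v)\cap C$. Since each coincident set loses only one vertex, $u$ and $v$ lie in different coincident sets, and one has $N(w)\cap C=E_i\cap C$ when $w\in S_i^{(1)}$ and $N(w)\cap C=(E_i\cup E_{i+1})\cap C$ when $w\in S_i^{(2)}$ (using $w\notin C$). The plan is to isolate, in each configuration, a coincident class whose $C$-part lies in one trace set but not the other, exploiting that non-consecutive hyperedges are disjoint while consecutive ones meet only in their overlap. The delicate cases are when $u,v$ occupy neighbouring coincident sets, for instance $u\in S_i^{(1)}$ with $v\in S_{i+1}^{(1)}$, or $u\in S_i^{(1)}$ with $v\in S_i^{(2)}$, where the two trace sets share $C_i^{(2)}$; here I would produce a separating vertex from the flanking overlap $C_{i-1}^{(2)}$ (or $C_{i+1}^{(2)}$), which sits in exactly one of the two spans, and fall back on the non-emptiness of $C_1^{(1)}$ or $C_m^{(1)}$ exactly when a boundary of the path removes such a flanking overlap. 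Running through the finitely many adjacency patterns in this way shows $C$ is a locating-dominating set, whence $\lambda(H)\le|C|$, and combining with Theorem \ref{th2.6} yields the asserted equality.
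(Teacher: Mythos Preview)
Your proposal is correct and follows essentially the same strategy as the paper: exhibit $C$ (the set obtained by removing one vertex from each non-empty coincident set) as a locating-dominating set via a case analysis on the positions of $u,v\in V(H)\setminus C$, then combine with Theorem~\ref{th2.6}. The paper's version differs only cosmetically: it first disposes of the situation where every $C_i^{(1)}$ is non-empty by invoking Theorem~\ref{th2.7}, and then handles the remaining configurations, whereas you begin with the structural observation $\Delta(H)=2$ and proceed uniformly through the adjacency patterns; your organisation is in fact a bit more explicit about domination and about the boundary cases where one must fall back on $C_1^{(1)}$ or $C_m^{(1)}$.
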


\begin{proof}
If $C_{i}^{(1)}\neq \emptyset$, for all $i$. Then, by Theorem
\ref{th2.7}, we have the required result. If
$C_{i}^{(1)}=\emptyset$ for $2\leq i\leq m-1$, then we have the following
three cases for $S_{i}^{(1)}$.

\noindent \textbf{Case 1:}\ ($S_{i}^{(1)} \neq
\emptyset$ for all $2\leq i\leq m-1$).\\
Let $v,v'\in V(H)- C$. If $v\in E_1$ and $v'\in E_m$. Then, clearly,
$N(v)\cap C \neq N(v')\cap C$ because $(N(v)\cap C)\subset E_1$ and
$(N(v')\cap C)\subset E_m$.

If $v$ and $v'$ are common vertices, then again $N(v)\cap C \neq
N(v')\cap C$ because $H$ is Sperner and each $S_{i}^{(2)}$ is
disjoint.

If $v\in E_i$ and $v'\in E_j$, for $2\leq i,j\leq m-1$ and $i\neq j$. Then $N(v)=\{E_{i-1}\cap E_i\}\cup \{E_i\cap E_{i+1}\} \neq
\{E_{j-1}\cap E_j\} \cup \{E_j\cap E_{j+1}\}=N(v')$, and hence
$N(v)\cap C\neq N(v')\cap C$.

If $v\in E_{i}$ and $v'$ is a common vertex. Then by above
discussion, we note that both the vertices have their distinct open
neighborhoods in $C$.

The other two cases when $S_{i}^{(1)} \neq \emptyset$ for some
$2\leq i\leq m-1$ and when $S_{i}^{(1)}= \emptyset$ for all $2\leq i\leq
m-1$ follows from Case 1.
\end{proof}

We know that a 2-uniform linear hyperpath (hypercycle) is a simple path (cycle). The exact value for $\lambda(H)$ of a simple path (cycle) is
already determined by Slater.
\begin{Theorem}\label{th2.32}\cite{slater2}
Let $P_{n\geq 2}$ be a simple path and $C_{n\geq 3}$ be a simple cycle.
Then $\lambda(P_{n})= \lambda(C_{n})=\lceil\frac{2n}{5}\rceil$.
\end{Theorem}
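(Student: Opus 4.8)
The plan is to prove a classical result of Slater stating $\lambda(P_n)=\lambda(C_n)=\lceil 2n/5\rceil$ for simple paths and cycles. I note that this theorem is cited from \cite{slater2}, so the intended presentation in the paper is simply a reference; nevertheless, I would reconstruct a self-contained argument. The approach splits naturally into a lower bound and a matching construction (upper bound), and since a path and a cycle behave almost identically, I would treat the cycle first and then adapt to the path.

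For the upper bound, I would exhibit an explicit locating-dominating set achieving $\lceil 2n/5\rceil$. The key combinatorial idea is to partition (most of) the vertices into consecutive blocks of five, say $v_{5j+1},\dots,v_{5j+5}$, and place exactly two chosen vertices in each block, for instance $v_{5j+2}$ and $v_{5j+4}$. First I would verify that this set dominates: every undominated vertex is adjacent (in the path/cycle sense) to at least one selected vertex. Next, and more delicately, I would verify the locating condition, namely that distinct vertices outside $S$ receive distinct nonempty neighborhood-traces $N(u)\cap S$. With the pattern $\ast\,\bullet\,\ast\,\bullet\,\ast$ repeated, the three unselected vertices in each block are distinguished by whether they see the left selected vertex, the right one, or both, and vertices in different blocks are separated because their traces lie in disjoint pairs of selected vertices. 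I would then handle the at most four leftover vertices when $5\nmid n$ by a short case analysis on $n \bmod 5$, adjusting the final block so the count stays $\lceil 2n/5\rceil$.

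For the lower bound, I would argue that any locating-dominating set $S$ must satisfy $|S|\ge 2n/5$. The standard counting technique is a discharging or averaging argument: consider the vertices not in $S$ together with their required distinct traces. Each vertex of $V(H)-S$ has a nonempty trace $N(u)\cap S$, these traces are pairwise distinct, and in a path or cycle each vertex has at most two neighbors, so each trace is a nonempty subset of $S$ of size at most two. Counting the number of available distinct nonempty traces of size at most two that can be realized, and bounding how many unselected vertices a fixed selected vertex can serve, yields $|V(H)-S|\le \tfrac{3}{2}|S|$ roughly; solving $n-|S|\le \tfrac{3}{2}|S|$ gives $|S|\ge 2n/5$, whence $|S|\ge\lceil 2n/5\rceil$ since $|S|$ is an integer.

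The main obstacle I anticipate is making the lower-bound counting fully rigorous, since naive counting of distinct traces overcounts: I must carefully bound the number of unselected vertices that can be \emph{uniquely} identified using a prescribed selected vertex, which requires looking at the local structure of runs of consecutive unselected vertices between selected ones. In a path or cycle, a maximal run of $r$ consecutive non-$S$ vertices can have at most $3$ such vertices (flanked by selected vertices), since four or more unselected vertices in a row would force two of them to share an identical trace or leave an interior vertex undominated. Establishing this local bound and then summing over all runs is where the real work lies; the endpoint adjustments for paths (degree-one ends) versus the fully symmetric cycle constitute a secondary, more routine complication.
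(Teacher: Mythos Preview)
The paper does not give its own proof of this theorem: it is stated with the citation \cite{slater2} and used as background, so there is nothing to compare your argument against. Your decision to supply a self-contained reconstruction is reasonable, and the overall strategy (periodic construction for the upper bound, gap-counting for the lower bound) is the standard one.

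One concrete slip in your lower-bound sketch: in a simple path or cycle a maximal run of consecutive non-$S$ vertices has length at most~$2$, not~$3$. In a configuration $s,v_1,v_2,v_3,s'$ with $v_1,v_2,v_3\notin S$, the middle vertex $v_2$ has $N(v_2)\cap S=\emptyset$ and is undominated. The domination condition alone therefore only gives $n-|S|\le 2|S|$, i.e.\ $|S|\ge n/3$, which is too weak. The extra strength needed for $|S|\ge 2n/5$ comes from the locating condition: two \emph{consecutive} gaps of length~$2$ are forbidden, since in $s_i,v_1,v_2,s_{i+1},v_3,v_4,s_{i+2}$ one has $N(v_2)\cap S=\{s_{i+1}\}=N(v_3)\cap S$. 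Hence at most half of the $|S|$ gaps have length~$2$, giving $n-|S|\le |S|+\lfloor |S|/2\rfloor\le \tfrac{3}{2}|S|$ and thus $|S|\ge \lceil 2n/5\rceil$. With this correction your plan goes through.
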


Observe that, in the case of 3-uniform linear hyperpath $H$ with two
hyperedges, $\lambda(H)=2$. We also observe that, the 3-uniform
linear hyperpath with three and four hyperedges, respectively, has the location-domination number $3$ and $4$, respectively. In the next result, we determine the explicit value for the location-domination number for 3-uniform linear hyperpaths with more than four
hyperedges.

\begin{Theorem}\label{th2.33}
Let $P_{m,3}$ be a 3-uniform linear hyperpath with
$m\geq 5$ hyperedges. Let $m=3a+b+2$, where $a\geq 1$ and $0\leq b\leq 2$.
Then $\lambda (P_{m,3})=2a+b+2$.
\end{Theorem}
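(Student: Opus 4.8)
The plan is to pass from the hypergraph to its open-neighborhood (coincidence) structure and then prove matching bounds. First I would record the neighborhoods explicitly. Label the vertex shared by $E_i$ and $E_{i+1}$ as $c_i$ $(1\le i\le m-1)$, the unique degree-one vertex of $E_i$ as $p_i$ $(2\le i\le m-1)$, and the two degree-one vertices of $E_1,E_m$ as $a_1,b_1$ and $a_m,b_m$. Since $H$ is $3$-uniform and linear, $N(p_i)=\{c_{i-1},c_i\}$ and $N(c_i)=\{c_{i-1},c_{i+1},p_i,p_{i+1}\}$ (with obvious end modifications), while $a_1,b_1$ share the closed neighborhood $E_1$ and $a_m,b_m$ share $E_m$. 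By Lemma \ref{lem2.4} every locating-dominating set $S$ must then contain at least one of $\{a_1,b_1\}$ and one of $\{a_m,b_m\}$, contributing $2$. Writing $A=S\cap\{c_1,\dots,c_{m-1}\}$ and $B=S\cap\{p_2,\dots,p_{m-1}\}$, the whole problem reduces to the interior estimate $|A|+|B|=\lceil\tfrac23(m-2)\rceil=2a+b$.

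For the upper bound I would exhibit $S=\{a_1,a_m\}\cup\{c_j:\ 2\le j\le m-1,\ j\not\equiv 1\!\!\pmod 3\}$. The selected spine vertices form blocks $c_2c_3,\ c_5c_6,\dots$ of length two separated by single gaps, and a count gives exactly $2a+b$ such vertices, so $|S|=2a+b+2$. Checking that $S$ is locating-dominating is routine: each apex $p_i$ has $c_{i-1}$ or $c_i$ in $S$, each gap or end vertex is dominated by $a_1,a_m$ or an adjacent block, and the traces $N(v)\cap S$ are pairwise distinct because each is a singleton $\{c_j\}$, a consecutive pair $\{c_j,c_{j+1}\}$, a straddling pair $\{c_{j-1},c_{j+1}\}$, or one of the end sets $\{a_1\},\{a_1,c_2\},\{a_m\},\{a_m,c_{m-1}\}$, all pinned to disjoint spine locations.

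The heart of the matter is the lower bound $|A|+|B|\ge 2a+b$, which I would get by counting apex signatures. Any $p_i\notin S$ must be dominated, so $c_{i-1}\in A$ or $c_i\in A$, and hence its signature $N(p_i)\cap S$ is a nonempty subset of $\{c_{i-1},c_i\}$, i.e.\ a consecutive pair or a singleton; locating forces all these to be distinct. Decomposing the selected spine vertices into maximal runs, with $s$ runs total and $s_1,s_{\ge2}$ of lengths $1$ and $\ge2$, the number of available pair-signatures equals the number of internal run edges, namely $|A|-s$. The crucial location constraint is that a singleton $\{c_j\}$ can be the signature of at most one non-selected apex, so an \emph{isolated} selected $c_j$ (a length-one run) yields only one usable singleton rather than two; carefully, the usable singletons total at most $2s_{\ge2}+s_1$. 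Thus the number of non-selected apexes is at most $(|A|-s)+(2s_{\ge2}+s_1)=|A|+s_{\ge2}$, so $m-2-|B|\le |A|+s_{\ge2}\le\tfrac32|A|$ using $2s_{\ge2}\le|A|$. This rearranges to $\tfrac32|A|+|B|\ge m-2$, hence $|A|+|B|\ge\tfrac23(m-2)$, and by integrality $\ge 2a+b$. Adding the two forced end vertices gives $\lambda(P_{m,3})\ge 2a+b+2$.

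The step I expect to be the main obstacle is the singleton bookkeeping in that last paragraph: rigorously deriving, from the apex-location requirement, that an isolated selected spine vertex cannot supply two distinct non-selected apex signatures, and treating the boundary runs (those meeting $c_1$ or $c_{m-1}$) where a flanking apex is absent so an isolated end vertex is harmless. This boundary freedom is exactly what the construction exploits to realize the ceiling for $b=1,2$, so the interplay between the counting inequality and the end effects is where the delicate case-checking will live. Once $\tfrac32|A|+|B|\ge m-2$ is established uniformly in $m$, everything else is immediate.
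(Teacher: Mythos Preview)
Your plan is sound and, in one respect, strictly more complete than the paper's own argument. The paper proceeds purely by construction: for each residue $b\in\{0,1,2\}$ it writes down an explicit set $S$ of size $2a+b+2$ (two degree-one end vertices together with a pattern of degree-two ``spine'' vertices) and lists the $S$-neighbourhoods of the remaining vertices to verify that $S$ is locating-dominating. It then asserts that $S$ is \emph{minimum} without ever bounding $\lambda(P_{m,3})$ from below; no argument in the proof (or elsewhere in the paper) rules out a smaller set. Your upper-bound construction $S=\{a_1,a_m\}\cup\{c_j:\,2\le j\le m-1,\ j\not\equiv 1\pmod 3\}$ is the same mechanism in different coordinates.

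What you add is a genuine lower bound via the apex-signature count. Because $N(p_i)=\{c_{i-1},c_i\}$ is disjoint from the four end vertices, the inequality $(m-2)-|B|\le |A|+s_{\ge 2}$ together with $s_{\ge 2}\le\lfloor|A|/2\rfloor$ depends only on $A$ and $B$; combined with the two forced end vertices from Lemma~\ref{lem2.4} it yields $|A|+|B|\ge\lceil\tfrac{2}{3}(m-2)\rceil=2a+b$ and hence $\lambda(P_{m,3})\ge 2a+b+2$. The obstacle you anticipate is milder than you think: a boundary run touching $c_1$ or $c_{m-1}$ can only \emph{reduce} the number of usable singleton signatures (the missing flanking apex simply does not exist), so your inequality $2s_{\ge 2}+s_1$ already overcounts there and remains valid with no extra casework. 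The only place the boundary slack matters is in the construction, where you already exploit it by leaving $c_1$ and $c_{m-1}$ out of $S$.
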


\begin{proof}Let $v_{i}$ represents a vertex of degree one in the
hyperedge $E_{i}$ and $v_{i,i+1}\in E_{i}\cap E_{i+1}$ is common vertex
of degree two. Since $E_{1}$ and  $E_{m}$ contains two vertices of
degree one, we denote them as $v_{1}$, $v_{1}'$ for $E_{1}$ and $v_{m}$, $v_{m}'$ for $E_{m}$.

Our claim is that $\lambda(P_{m,3})=2a+b+2$. We prove it by showing that a set $S\subseteq
V(H)$ with $|S|=2a+b+2$ is a minimum locating-dominating set for $P_{m,3}$.
We consider the following three cases:

\noindent \textbf{Case 1:}\ $(b=0)$\ Let $S=\{v_{1}',v_{m}'\}\cup
\{v_{3i+2,3(i+1)},v_{3(i+1),3(i+1)+1}\ |\ 0\leq i\leq a-1\}$. This set $S$ is a minimum locating-dominating set of order $2a+2$ because of the following unequal $S$-neighborhoods: $N(v_{1})\cap S=\{v_{1}'\}$; $N(v_{m})\cap S=\{v_{m}'\}$; $N(v_{3i+1, 3i+2})\cap
S=\{v_{3i,3i+1}, v_{3i+2,3(i+1)}\}$ with $v_{0,1}=v_{1}'$;
$N(v_{3i+2})\cap S=\{v_{3i+2,3(i+1)}\}$; $N(v_{3(i+1)})\cap
S=\{v_{3i+2,3(i+1)},v_{3(i+1),3(i+1)+1}\}$; $N(v_{3(i+1)+1})\cap
S=\{v_{3(i+1),3(i+1)+1}\}$ and $N(v_{m-1,m})\cap
S=\{v_{m-2,m-1},$ $v_{m}'\}$.

\noindent \textbf{Case 2:}\ $(b=1)$\ The set $S=\{v_{1}',v_{m}'\}\cup \{v_{3i+2,3(i+1)},v_{3(i+1),3(i+1)+1}\ |\ 0\leq
i\leq a-1\}\cup \{v_{m-1,m}\}$ is a required minimum locating-dominating
set of order $2a+3$ because of the following unequal $S$-neighborhoods of all the elements in $V(P_{m,3})- S$: $N(v_{m})\cap S=\{v_{m-1,m}, v_{m}'\}$;
$N(v_{m-2,m-1})\cap S=\{v_{m-3,m-2},v_{m-1,m}\}$; $N(v_{m-1})\cap
S=\{v_{m-1,m}\}$ and those listed in Case 1.

\noindent \textbf{Case 3:}\ $(b=2)$\ Let $S=\{v_{1}',v_{m}'\}\cup \{v_{3i+2,3(i+1)},v_{3(i+1),3(i+1)+1}\ |\ 0\leq
i\leq a-1\}\cup \{v_{m-2,m-1}, v_{m-1,m}\}$. Together with the unequal $S$-neighborhoods of all the
elements in $V(P_{m,3})- S$ listed in the Cases 1 and 2, and the $S$-neighborhood $N(v_{m-1})\cap S=\{v_{m-2,m-1}, v_{m-1,m}\}$, one can conclude that $S$ is a minimum locating-dominating set of order $2a+4$.
\end{proof}

In the next result, we determine the explicit value for the location-domination number for 3-uniform linear hypercycles having more than five
hyperedges.

\begin{Theorem}\label{th2.34}
Let $C_{m,3}$ be a 3-uniform linear hypercycle with
$m\geq 6$ hyperedges. Let $m=3a+b$, where $a\geq 2$ and $0\leq b\leq 2$.
Then $\lambda(C_{m,3})=2a+b$.
\end{Theorem}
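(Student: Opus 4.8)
The plan is to follow the same strategy used for hyperpaths in Theorem \ref{th2.33}, namely to exhibit an explicit locating-dominating set $S$ of the claimed size $2a+b$ and argue that no smaller set can work. First I would set up notation analogous to the hyperpath case: since $C_{m,3}$ is a $3$-uniform linear hypercycle, each hyperedge $E_i$ has exactly three vertices, and consecutive hyperedges (indices taken mod $m$) share exactly one vertex $v_{i,i+1}\in E_i\cap E_{i+1}$ of degree two, leaving exactly one vertex $v_i$ of degree one in each $E_i$. Thus every hyperedge contributes one degree-one vertex and borrows two degree-two ``corner'' vertices from its neighbors. By Corollary \ref{cor2.12} the uniform bound $m(k-3)$ degenerates to $0$ when $k=3$, so the location-domination number here is driven entirely by the degree-two corner vertices and the need to separate the degree-one vertices, which is exactly why the answer $2a+b$ is smaller than the hyperpath value $2a+b+2$: there are no distinguished endpoint hyperedges $E_1,E_m$ each forcing an extra vertex.

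Next I would construct $S$ by the same periodic pattern as in Theorem \ref{th2.33}, selecting corner vertices $v_{i,i+1}$ in blocks of period three so that every omitted vertex receives a distinct, nonempty trace on $S$. Concretely, writing $m=3a+b$, I would take $S=\{v_{3i+2,3(i+1)},\,v_{3(i+1),3(i+1)+1}\mid 0\le i\le a-1\}$ together with $b$ additional corner vertices to handle the leftover block, with all indices read modulo $m$ so that the pattern ``wraps around'' the cycle consistently. I would then verify, case by case on $b\in\{0,1,2\}$ exactly as in the hyperpath proof, that each degree-one vertex $v_j$ and each degree-two vertex $v_{j,j+1}$ lying outside $S$ has an $S$-neighborhood $N(\cdot)\cap S$ that is nonempty and pairwise distinct. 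The degree-one vertex $v_j$ sees only the two corners of its own hyperedge $E_j$, while a degree-two vertex $v_{j,j+1}$ sees the corners adjacent to it in $E_j$ and $E_{j+1}$; the periodicity of the chosen pattern guarantees these intersection sets are all different.

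For the matching lower bound $\lambda(C_{m,3})\ge 2a+b$, I would invoke the general machinery: by Theorem \ref{th2.6} we have $|S|\ge |C|=\sum_{d,i}|C_i^{(d)}|$, and I would count the forced elements. Each degree-one vertex sits alone in its coincident set only if paired, but more to the point, a careful counting argument on the cyclic structure shows that distinguishing all the degree-one and degree-two vertices around the cycle cannot be done with fewer than two selected corners per period-three block, giving the $2a$ term, plus $b$ corrections for the residue class. This counting is where I expect the main obstacle: unlike the hyperpath, the cycle has no boundary, so one cannot simply ``read off'' forced endpoint vertices, and one must rule out clever non-periodic selections that might beat the periodic pattern. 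The cleanest route is likely a local argument showing that among any three consecutive hyperedges, at least two corner vertices must be selected (otherwise two of the involved vertices collide in their $S$-trace), and then summing this local constraint around the cycle while correctly accounting for the overlap of windows and the residue $b$. I would close by asserting that the explicit set achieves this bound, so the value $2a+b$ is exact.
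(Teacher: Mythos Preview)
Your construction plan---periodic selection of degree-two ``corner'' vertices in blocks of three hyperedges, with $b$ extra corners to absorb the residue, and case-by-case verification of distinct $S$-neighborhoods---is exactly what the paper does; your set is merely a cyclic shift of the paper's $S=\{v_{3i+1,3i+2},v_{3i+2,3i+3}\mid 0\le i\le a-1\}$ (plus $v_{m,1}$ when $b=1$, plus $v_{m-1,m},v_{m,1}$ when $b=2$).

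Where you diverge is the lower bound. The paper gives \emph{no} argument for $\lambda(C_{m,3})\ge 2a+b$: it simply exhibits $S$, lists the $S$-neighborhoods, and declares $S$ ``minimum''. Your attempt to supply one is therefore a genuine addition, but the sketch as written has a gap. The local claim ``among any three consecutive hyperedges at least two corner vertices must be selected'' tacitly assumes a minimum locating-dominating set consists only of corner vertices $v_{i,i+1}$; nothing prevents it from containing degree-one vertices $v_i$, and once those are allowed the local counting in windows of three edges no longer cleanly yields two-per-window. Invoking Theorem~\ref{th2.6} does not help either, since here every coincident set is a singleton and the bound $|C|$ collapses to zero. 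If you want a rigorous lower bound you will need a sliding-window argument that counts all vertices of $S$ (not just corners) against pairs of vertices outside $S$ whose traces would otherwise coincide, or else an averaging argument over the $m$ hyperedges; either way it requires more than what you have sketched, and more than what the paper provides.
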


\begin{proof} In $C_{m,3}$, each $v_{i}\in E_{i}$ represents a vertex
of degree one and $v_{i,i+1}\in E_{i}\cap E_{i+1}$ with
$v_{m,m+1}=v_{m,1}$. We prove that $\lambda (C_{m,3})=2a+b$ by
showing that a set $S\subseteq V(H)$ with $|S|=2a+b$ is a minimum
locating-dominating set for $C_{m,3}$. We discuss the following three cases for $b$:

\noindent \textbf{Case 1:}\ $(b=0)$\ Let $S= \{v_{3i+1,3i+2},v_{3i+2,3i+3}\ |\ 0\leq i\leq a-1\}$. Then
$|S|=2a$. One can see that $S$ is a minimum locating-dominating set because of the following unequal $S$-neighborhoods: $N(v_{3i+1})\cap
S=\{v_{3i+1,3i+2}\}$; $N(v_{3i+2})\cap
S=\{v_{3i+1,3i+2},v_{3i+2,3i+3}\}$; $N(v_{3i+3})\cap
S=\{v_{3i+2,3i+3}\}$; $N(v_{3i+3,3(i+1)+1})\cap
S=\{v_{3i+2,3i+3},\\ v_{3(i+1)+1,3(i+1)+2}\}$.

\noindent \textbf{Case 2:}\ $(b=1)$\ The set $S= \{v_{3i+1,3i+2},v_{3i+2,3i+3}\ |\ 0\leq i\leq a-1\}\cup
\{v_{m,1}\}$ is a minimum locating-dominating set of order $2a+1$ because all the elements in $V(C_{m,3})- S$ have unequal $S$-neighborhoods
$N(v_{1})\cap S=\{v_{m,1}, v_{1,2}\}$; $N(v_{m})\cap
S=\{v_{m,1}\}$ and those listed in Case 1.

\noindent \textbf{Case 3:}\ $(b=2)$\ Let $S= \{v_{3i+1,3i+2},v_{3i+2,3i+3}\ |\ 0\leq i\leq a-1\}\cup
\{v_{m-1,m},v_{m,1}\}$ be the set of order $2a+2$. Then the $S$-neighborhoods $N(v_{1})\cap
S=\{v_{m,1},v_{1,2}\}$; $N(v_{m-1})\cap S=\{v_{m-1},v_{m}\}$;
$N(v_{m})\cap S=\{v_{m-1,m},v_{m,1}\}$ and those listed in Case 1 all are unequal, which implies that $S$ is a minimum locating-dominating set.
\end{proof}

\begin{Proposition}\label{prop2.15}
Let $H$ be a hyperstar with $n$ vertices and $m\geq 2$
hyperedges. If for all $1\leq i\leq m$, $|S_{i}^{(1)}|= 1$ and
$|\mathcal{C}|>1$, then $\lambda(H)= n-2$, where $\mathcal{C}$ is the center of the hyperstar.
\end{Proposition}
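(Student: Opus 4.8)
The plan is to first pin down the structure of $H$ under the hypotheses, and then establish the lower and upper bounds separately.

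First I would describe the vertex set. Since $E_i\cap E_j=\mathcal{C}$ for every pair of distinct hyperedges, any vertex lying in two hyperedges lies in $\mathcal{C}$ and hence in \emph{all} of them; thus every vertex has degree either $1$ or $m$, the degree-$m$ vertices being exactly the center $\mathcal{C}$ (which is the coincident set $S_1^{(m)}$ of the degree-$m$ vertices). Consequently each hyperedge satisfies $E_i=\mathcal{C}\cup S_i^{(1)}$, and the hypothesis $|S_i^{(1)}|=1$ gives exactly one degree-one vertex $u_i$ per hyperedge. Hence $V(H)=\mathcal{C}\cup\{u_1,\ldots,u_m\}$ is a disjoint union and $n=|\mathcal{C}|+m$. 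I would also record the two neighborhood facts that drive everything: each center vertex $c$ satisfies $N[c]=V(H)$, while each degree-one vertex satisfies $N(u_i)=E_i\setminus\{u_i\}=\mathcal{C}$. This last equality is precisely where $|S_i^{(1)}|=1$ enters, since it is what collapses every open neighborhood $N(u_i)$ down to $\mathcal{C}$.

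For the lower bound I would argue that at most two vertices can lie outside a locating-dominating set $S$. The center vertices all share the closed neighborhood $V(H)$, so Lemma \ref{lem2.4}, equivalently Lemma \ref{lem2.5} applied to the coincident set $\mathcal{C}$, forces $|S\cap\mathcal{C}|\geq|\mathcal{C}|-1$. Independently, all degree-one vertices share the open neighborhood $\mathcal{C}$, so if two of them, say $u_i$ and $u_j$, were both outside $S$, then $N(u_i)\cap S=\mathcal{C}\cap S=N(u_j)\cap S$, contradicting the locating property; hence $|S\cap\{u_1,\ldots,u_m\}|\geq m-1$. Adding these two inequalities gives $|S|\geq(|\mathcal{C}|-1)+(m-1)=n-2$.

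For the upper bound I would exhibit a set of this size. Fix one center vertex $c_0$ and one degree-one vertex $u_1$, and set $S=V(H)\setminus\{c_0,u_1\}$, so $|S|=n-2$. It then suffices to check the locating-dominating condition on the two excluded vertices: $N(c_0)\cap S=S$ and $N(u_1)\cap S=\mathcal{C}\setminus\{c_0\}$. Both are nonempty (here $|\mathcal{C}|>1$ is exactly what keeps $\mathcal{C}\setminus\{c_0\}$ nonempty so that $u_1$ is dominated), and they are unequal because $S$ contains the degree-one vertices $u_2,\ldots,u_m$, which is nonempty since $m\geq2$, whereas $\mathcal{C}\setminus\{c_0\}$ contains none of them. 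This yields $\lambda(H)\leq n-2$ and completes the argument.

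The main obstacle, that is, the one genuinely new idea rather than a routine verification, is the degree-one half of the lower bound. Lemma \ref{lem2.4} governs vertices with equal \emph{closed} neighborhoods, but the vertices $u_i$ have pairwise distinct closed neighborhoods $\mathcal{C}\cup\{u_i\}$ and coincide only in their \emph{open} neighborhoods; so I cannot invoke it directly and must argue straight from the definition of a locating-dominating set. Flagging the exact role of each hypothesis is the delicate point: $|S_i^{(1)}|=1$ is what makes every $N(u_i)$ equal to $\mathcal{C}$, and $|\mathcal{C}|>1$ is what permits the single excluded center vertex to still leave every degree-one vertex dominated, whereas for $|\mathcal{C}|=1$ the value would instead be $n-1$.
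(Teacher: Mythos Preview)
Your proposal is correct and follows essentially the same approach as the paper: partition $V(H)$ into the center $\mathcal{C}$ and the degree-one vertices, then argue via Lemma \ref{lem2.4} (for the center) and via equal open neighborhoods (for the degree-one vertices) that at most one vertex from each part can be omitted. Your write-up is in fact more complete than the paper's, since you explicitly exhibit and verify an $(n-2)$-element locating-dominating set for the upper bound, whereas the paper only establishes the lower bound and tacitly assumes equality.
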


\begin{proof} Suppose $A=\mathcal{C}$ and $B=\bigcup \limits
_{i=1}^{m}S_{i}^{(1)}$. Since any two vertices of $A$ have their
same closed neighborhood so, by Lemma \ref{lem2.4}, $|A|-1$ vertices
of $A$ must belong to a minimum locating-dominating set $S$ for $H$. Further, for any two elements $u_{1}$ and $u_{2}$
of $B$, we have $N(u_{1})=N(u_{2})$. It follows that if there exists
$\{u_{1},u_{2}\}\subseteq B- S$, then $N(u_{1})\cap
S=N(u_{2})\cap S$, which implies that $S$ contains exactly $|B|-1$
vertices of $B$. Since $A\cup B= V(H)$. Hence, $\lambda(H)= |A|+|B|-2$.
\end{proof}

If each coincident set $S_i^{(1)}\ (1\leq i\leq m)$ in a hyperstar with $m$ hyperedges has at least two elements, then we have the following straightforward proposition:

\begin{Proposition}\label{prop2.16} Let $H$ be a hyperstar with $m \geq 2$
hyperedges and for all $i$, $|S_{i}^{(1)}|> 1$, then $\lambda(H)=
\sum\limits_{i=1}^{m}|C^{(1)}_{i}|+|\mathcal{C}|-1$, where $\mathcal{C}$ is the center of the hyperstar.
\end{Proposition}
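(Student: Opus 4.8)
The plan is to pin down the coincident-set structure of a hyperstar and then sandwich $\lambda(H)$ between a counting lower bound coming from Lemma \ref{lem2.5} and an explicit construction that meets it.

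First I would record the structure. Because $E_i\cap E_j=\mathcal{C}$ for all $i\neq j$, any vertex lying in two distinct hyperedges is forced into $\mathcal{C}$, and every vertex of $\mathcal{C}$ has degree $m$; so by Proposition \ref{prop2.3} the vertex set splits into the center $\mathcal{C}$ (the unique coincident set of degree $m$) together with the degree-one sets $S_{1}^{(1)},\dots,S_{m}^{(1)}$, with $E_i=\mathcal{C}\cup S_{i}^{(1)}$ for each $i$. Two immediate consequences, which drive everything, are that $N[v]=V(H)$ for every $v\in\mathcal{C}$ and $N[w]=E_i$ for every $w\in S_{i}^{(1)}$.

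For the lower bound I would apply Lemma \ref{lem2.4} (equivalently Lemma \ref{lem2.5}) to each of these coincident sets: all center vertices share the closed neighborhood $V(H)$, and all vertices of $S_{i}^{(1)}$ share the closed neighborhood $E_i$, so any locating-dominating set must contain at least $|\mathcal{C}|-1$ vertices of $\mathcal{C}$ and at least $|S_{i}^{(1)}|-1$ vertices of each $S_{i}^{(1)}$. Since these sets partition $V(H)$, summing gives $\lambda(H)\geq(|\mathcal{C}|-1)+\sum_{i=1}^{m}(|S_{i}^{(1)}|-1)$. For the matching upper bound I would exhibit a set $S$ meeting this count: delete one vertex $c_{0}$ from $\mathcal{C}$ and one vertex $w_i$ from each $S_{i}^{(1)}$, so that the vertices outside $S$ are exactly $c_{0},w_{1},\dots,w_{m}$. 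Here $N(c_{0})\cap S=S$, while $N(w_i)\cap S=(\mathcal{C}\setminus\{c_{0}\})\cup(S_{i}^{(1)}\setminus\{w_i\})$. The hypothesis $|S_{i}^{(1)}|>1$ makes each $S_{i}^{(1)}\setminus\{w_i\}$, hence each $N(w_i)\cap S$, non-empty (domination); the hypothesis $m\geq 2$ together with $|S_{j}^{(1)}|>1$ makes $N(c_{0})\cap S=S$ differ from every $N(w_i)\cap S$, since $S$ still meets some $S_{j}^{(1)}$ with $j\neq i$; and for $i\neq j$ the disjoint, non-empty degree-one parts $S_{i}^{(1)}\setminus\{w_i\}$ and $S_{j}^{(1)}\setminus\{w_j\}$ force $N(w_i)\cap S\neq N(w_j)\cap S$. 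Thus $S$ is locating-dominating of the required size, and equality holds. Finally, for this minimum $S$ one has $|C_{i}^{(1)}|=|S_{i}^{(1)}|-1$, so the bound rewrites as $\lambda(H)=\sum_{i=1}^{m}|C_{i}^{(1)}|+|\mathcal{C}|-1$; equivalently, since every hyperedge now contains at least two degree-one vertices, the statement is exactly the specialization of Theorem \ref{th2.7} to a hyperstar.

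The routine part is the structural partition. The one place that genuinely uses both hypotheses ($|S_{i}^{(1)}|>1$ and $m\geq 2$) — and hence the main thing to get right — is the distinctness of the $S$-neighborhoods of the two omitted representatives, i.e. ensuring that the deleted center vertex $c_0$ and the deleted degree-one vertices $w_i$ are never confused with one another nor with each other.
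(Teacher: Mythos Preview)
Your proposal is correct and matches the paper's intent: the paper gives no explicit proof, simply calling the proposition ``straightforward'' immediately after noting that the hypothesis $|S_i^{(1)}|>1$ puts us in the setting of Theorem \ref{th2.7}. Your argument carries out exactly that specialization---partitioning $V(H)$ into $\mathcal{C}$ and the $S_i^{(1)}$, applying Lemma \ref{lem2.5} to each block for the lower bound, and exhibiting the obvious set of size $(|\mathcal{C}|-1)+\sum_i(|S_i^{(1)}|-1)$ for the upper bound---and you correctly identify at the end that this is precisely Theorem \ref{th2.7} read off for a hyperstar.
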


In a $k$-uniform linear hyperstar,
$|C_{i}^{(1)}|=k-2$, for all $1\leq i\leq m$, therefore we have the following consequence:

\begin{Corollary}\label{cor2.17}
Let $H$ be a $k$-uniform linear hyperstar with $m \geq 3$
hyperedges. Then for $k\geq 3$, $\lambda(H)= m(k-2)$.
\end{Corollary}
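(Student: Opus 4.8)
The plan is to reduce everything to Proposition~\ref{prop2.16}, whose formula $\lambda(H)=\sum_{i=1}^{m}|C_{i}^{(1)}|+|\mathcal{C}|-1$ already carries the real content; the corollary should then follow by reading off the two quantities on the right-hand side from the rigid structure of a $k$-uniform linear hyperstar. So the first task is to pin down that structure. Because $H$ is linear, any two hyperedges meet in at most one vertex, so the common intersection $\mathcal{C}=E_i\cap E_j$ forced by the hyperstar condition satisfies $|\mathcal{C}|\le 1$; since $\mathcal{C}\ne\emptyset$ by definition of a hyperstar, $|\mathcal{C}|=1$. Thus the center is a single vertex $c$ of degree $m$, and every other vertex lies in exactly one hyperedge, i.e.\ has degree one. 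Consequently each hyperedge $E_i$, having size $k$, consists of the center together with $k-1$ degree-one vertices, so $|S_{i}^{(1)}|=k-1$ for every $i$.

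Second, I would check the hypotheses of Proposition~\ref{prop2.16}. The assumption there is $|S_{i}^{(1)}|>1$ for all $i$, which here reads $k-1>1$, i.e.\ $k\ge 3$; this is exactly the range claimed, so the proposition applies (and the standing assumption $m\ge 2$ it requires is covered by $m\ge 3$). The remaining input is the value of each $|C_{i}^{(1)}|$. By Lemma~\ref{lem2.5} we have $|C_{i}^{(1)}|\ge|S_{i}^{(1)}|-1=k-2$, and the construction underlying Proposition~\ref{prop2.16} shows that a minimum locating-dominating set retains exactly $|S_{i}^{(1)}|-1$ vertices of each class $S_{i}^{(1)}$: omitting one vertex from each class suffices because the omitted vertices lie in distinct hyperedges, so their surviving $S$-neighborhoods $S_{i}^{(1)}\cap S$ are nonempty (as $k-2\ge 1$) and pairwise distinct. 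Hence $|C_{i}^{(1)}|=k-2$ for all $i$.

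Finally, substituting $|C_{i}^{(1)}|=k-2$ and $|\mathcal{C}|=1$ into the formula of Proposition~\ref{prop2.16} gives
\[
\lambda(H)=\sum_{i=1}^{m}(k-2)+1-1=m(k-2).
\]
The only genuinely new point in this argument is the \emph{structural} observation that linearity collapses the center to a single vertex and forces every non-central vertex to have degree one; everything else is bookkeeping on top of Proposition~\ref{prop2.16}. I therefore expect no serious obstacle, the main care being to confirm that the hyperstar is connected and Sperner—which it is, since all hyperedges share $c$ and all have the common size $k$, so none contains another—so that it falls under the standing assumptions and the earlier results apply legitimately.
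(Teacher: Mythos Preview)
Your proof is correct and follows essentially the same route as the paper: the corollary is stated immediately after the one-line observation that in a $k$-uniform linear hyperstar $|C_i^{(1)}|=k-2$ for all $i$, and is presented as a direct consequence of Proposition~\ref{prop2.16}. Your argument simply fills in the structural details (linearity forces $|\mathcal{C}|=1$, hence $|S_i^{(1)}|=k-1$, hence the hypothesis $k\ge 3$ is exactly what makes Proposition~\ref{prop2.16} applicable) that the paper leaves implicit.
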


Let $N_{1},N_{2},\ldots,N_{t}$ be $t$ disjoint finite sets with
$|N_{i}|=n_{i}$. A {\it complete $t$-partite $r$-uniform hypergraph}
is $H = K_{n_{1},n_{2},\ldots,n_{t}}^{r}$ has the vertex set $V(H) = \bigcup \limits_i^t N_{i}$ and for each subset $E_j\subseteq V(H)$, $E_j\in E(H)$ if $|E_j|=r$
and $|E_j\cap N_{i}|\leq 1$ \cite{bergc}.

\begin{Lemma}\label{lem2.21}
For each $t\in \mathbb{Z^{+}}$ and $t\geq 2$, if $S$ is a locating-dominating set for $K_{n_{1},n_{2},\ldots,n_{t}}^{r}$, then
$|S\cap N_{i}|\geq n_{i}-1$ for all $1\leq i\leq t$.
\end{Lemma}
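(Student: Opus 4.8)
The plan is to show that every part $N_i$ behaves like a twin class: all of its vertices share a single open neighborhood, so no two of them can lie outside a locating-dominating set at the same time. First I would record the two structural facts about $K_{n_1,\ldots,n_t}^{r}$ that drive everything. On one hand, since each hyperedge meets every part in at most one vertex, no hyperedge contains two distinct vertices of the same $N_i$; hence no two vertices of $N_i$ are coincident, and in particular $u\notin N(v)$ whenever $u,v\in N_i$. On the other hand, every vertex $w$ lying in a different part $N_j$ ($j\neq i$) is adjacent to each $u\in N_i$: one extends the pair $\{u,w\}$ to a hyperedge of size $r$ by adjoining one vertex from each of $r-2$ of the remaining parts. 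Combining these, I get $N(u)=V(H)\setminus N_i$ for every $u\in N_i$, so in particular $N(u)=N(v)$ for all $u,v\in N_i$.

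With this in hand the conclusion is immediate. Suppose, for contradiction, that $|S\cap N_i|\leq n_i-2$ for some $i$. Then $N_i\setminus S$ contains two distinct vertices $u$ and $v$. Since $u,v\in V(H)\setminus S$ and $N(u)=N(v)$, intersecting with $S$ gives $N(u)\cap S=N(v)\cap S$, which contradicts the defining property of a locating-dominating set, namely that $N(u)\cap S\neq N(v)\cap S$ for distinct vertices outside $S$. Hence $|N_i\setminus S|\leq 1$, i.e. $|S\cap N_i|\geq n_i-1$, for every $1\leq i\leq t$.

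The step that needs the most care is the adjacency claim, that $\{u,w\}$ genuinely extends to a hyperedge of size $r$. This uses that there are enough parts to supply the remaining $r-2$ vertices, i.e. $t\geq r$; this is automatic here because a complete $t$-partite $r$-uniform hypergraph has no hyperedges at all when $t<r$, which would contradict connectedness. I would state this observation once at the outset so that the adjacency argument goes through uniformly across all pairs. Everything else is bookkeeping, and the argument is simply the open-neighborhood analogue of Lemma \ref{lem2.4}, applied to the (open-neighborhood) twins living inside a common part rather than to closed-neighborhood twins.
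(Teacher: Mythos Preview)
Your argument is correct and follows essentially the same line as the paper's proof: both observe that any two vertices in a common part $N_i$ share the same open neighborhood, so at most one of them can lie outside a locating-dominating set. Your version is more explicit in justifying why $N(u)=V(H)\setminus N_i$ (including the observation that $t\geq r$ is forced by connectedness), whereas the paper simply asserts $N(u_1)=N(u_2)$ for $u_1,u_2\in N_i$ and proceeds directly to the contradiction.
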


\begin{proof}
Suppose that $A=N_{t_{1}},1\leq t_{1}\leq t$. and $B=\bigcup \limits_{\substack{
i=1\\
i\neq t_1}}^t N_{i}$. Then for each $u\in A$ and for each $v\in B$, we have
$N(u)\neq N(v)$ whereas, for any two elements $u_{1},u_{2}$ of $A$,
we have $N(u_{1})= N(u_{2})$. Hence, if there exists
$\{u_{1},u_{2}\}\subseteq A- S$, then we have $N(u_{1})\cap
S= N(u_{2})\cap S$, which implies $|S\cap N_{i}|\geq n_{i}-1$.
\end{proof}

\begin{Theorem}\label{th2.22}
Let $K_{n_{1},n_{2},\ldots,n_{t}}^{r}$ be a complete t-partite r-uniform hypergraph.
Then for $t\in \mathbb{Z^{+}}$ and for all $i$, $v_{i}\in
N_{i}$, $S= \bigcup\limits_i^t (N_{i}- \{v_{i}\})$ is a locating-dominating
set for $K_{n_{1},n_{2},\ldots,n_{t}}^{r}$ if and only if there is at most
one partite set of cardinality 1.
\end{Theorem}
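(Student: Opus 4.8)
The plan is to reduce the whole question to the single set $V(H)-S=\{v_1,\dots,v_t\}$ together with the traces $N(v_i)\cap S$. First I would pin down the adjacency structure of $K^{r}_{n_1,\dots,n_t}$. Since every hyperedge is an $r$-subset meeting each part $N_i$ at most once, two vertices in the same part are never coincident, whereas two vertices $u\in N_a$ and $w\in N_b$ with $a\neq b$ always are: because the hypergraph is genuinely $r$-uniform we have $t\geq r$, so $\{u,w\}$ extends to a hyperedge by adjoining one vertex from each of some $r-2$ further non-empty parts. Hence for $u\in N_i$ we have $N(u)=V(H)-N_i$, and $V(H)-S=\{v_1,\dots,v_t\}$ is exactly one vertex per part. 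Using $N(v_i)=V(H)-N_i$ and $S=\bigcup_j(N_j-\{v_j\})$, the part $j=i$ contributes nothing and each part $j\neq i$ contributes all of $N_j-\{v_j\}$, so the key formula is
$$N(v_i)\cap S=\bigcup_{j\neq i}\bigl(N_j-\{v_j\}\bigr).$$

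Next I would run the location test from this formula. For $i\neq k$ the two traces agree on every summand except that $N(v_i)\cap S$ contains $N_k-\{v_k\}$ and omits $N_i-\{v_i\}$, while $N(v_k)\cap S$ does the reverse; since the parts are pairwise disjoint, the symmetric difference is precisely $(N_i-\{v_i\})\cup(N_k-\{v_k\})$. Therefore $N(v_i)\cap S=N(v_k)\cap S$ holds if and only if $n_i=n_k=1$. This immediately yields the forward ("only if") direction: if two parts were singletons, the corresponding removed vertices $v_i,v_k$ would have identical traces and could not be separated (and if those traces are empty they are not even dominated), so $S$ would fail to be locating-dominating. Contrapositively, if $S$ is locating-dominating then at most one part has cardinality $1$; this is in the same spirit as Lemma \ref{lem2.21} applied part by part.

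For the converse, assume at most one part is a singleton. The symmetric-difference computation already shows that no two of the $v_i$ share a trace, so the separation requirement is met; it remains to verify domination, i.e.\ $N(v_i)\cap S\neq\emptyset$ for every $i$. By the trace formula this fails exactly when \emph{every} part other than $N_i$ is a singleton. Under the hypothesis there are at least two non-singleton parts, so for each index $i$ at least one such part differs from $i$ and keeps $N(v_i)\cap S$ non-empty; combining with separation gives that $S$ is locating-dominating.

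The step I expect to be the genuine obstacle is this domination bookkeeping, because a vertex $v_i$ deleted from a \emph{large} part is adjacent only to the other parts and is dominated only when some other part still retains an element of $S$. This is where the hypothesis does real work: the delicate boundary is $t=2$ with exactly one singleton part, where the $v_i$ lying in the large part has $N(v_i)\cap S=\emptyset$, so the argument truly requires at least two non-singleton parts (guaranteed once $t\geq 3$). I would therefore isolate and state the domination check carefully, since the location half is essentially automatic from the trace formula while the domination half is where the conclusion can genuinely break down.
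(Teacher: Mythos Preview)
Your approach mirrors the paper's: both argue via the open neighborhoods of the removed vertices $v_i$ and show that two traces $N(v_i)\cap S$, $N(v_k)\cap S$ coincide precisely when the corresponding parts are both singletons. The paper's proof of the converse, however, only checks that these traces are pairwise distinct and never verifies domination; you go further by isolating domination as the sensitive step, and your explicit trace formula $N(v_i)\cap S=\bigcup_{j\neq i}(N_j-\{v_j\})$ makes the whole argument cleaner than the paper's.

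Your instinct about domination is right, but you stop short of following it to its conclusion and your write-up becomes internally inconsistent. The sentence ``Under the hypothesis there are at least two non-singleton parts'' is simply false when $t=2$: take $N_1=\{a\}$, $N_2=\{b,c\}$ (so $K^{2}_{1,2}\cong P_3$), $v_1=a$, $v_2=b$; then $S=\{c\}$ and $N(b)\cap S=\emptyset$, so $b$ is undominated. This is a genuine counterexample to the theorem as stated, not merely a delicate point in the proof, and the paper is silent on it. Rather than hedging with ``where the conclusion can genuinely break down'', you should state outright that the converse needs the additional hypothesis $t\geq 3$ (or, when $t=2$, that neither part is a singleton); with that amendment your trace-formula argument is complete.
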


\begin{proof}
Suppose that there are two partite sets $N_{i_{t_{1}}}$ and $N_{i_{t_{2}}}$ such
that $n_{i_{t_{1}}}=n_{i_{t_{2}}}=1$. Let $u\in N_{i_{t_{1}}}$ and
$v\in N_{i_{t_{2}}}$. Then, by the definition of
$K_{n_{1},n_{2},\ldots,n_{t}}^{r}$, both the
vertices have their same open neighborhoods in $S$, which implies that $S$ is not a locating-dominating set, a contradiction.

Conversely, let $R=\bigcup \limits _{i=1}^{t}(N_{i}- S)$
and $N_{i_{t}}$ is the unique partite set such that $n_{i_{t}}=1$. Then, by
the definition of $S$, the set $R$ has exactly $t$ vertices of
$K_{n_{1},n_{2},\ldots,n_{t}}^{r}$, that is, $R$ has exactly one vertex from each
$N_{i}$. Since every two partite sets are disjoint, so every vertex in $R$ has
its different open neighborhood in $S$.
\end{proof}

\begin{Remark}\label{rem2.23}
In $K_{n_{1},n_{2},\ldots,n_{t}}^{r}$, note that, if there are $p$ partite
sets of cardinality 1, then all the vertices in these partite sets
except one will belong to every locating-dominating set for
$K_{n_{1},n_{2},\ldots,n_{t}}^{r}$.
\end{Remark}

Let $H$ be a hypergraph and let $k\in \mathbb{Z}^{+}$. The
{\it $k$-section} of $H$ is $H_{k}=(V(H),E_{k})$, where for a set
$X\subseteq V(H)$, $X$ belongs to $E_{k}$ if any of the following conditions
holds:\\
(1)\ $|X|\leq k$ and $X\in E(H)$,\\
(2)\ $|X|=k$ and there exists $E_{j}\in E(H)$ such that $X\subseteq E_{j}$ \cite{bergc}.

\begin{Lemma}\label{lem2.20}
For a hypergraph $H$ and for any $k\in
\mathbb{Z}^{+}$, a set $S\subset V(H)$ is locating-dominating in $H$
if and only if it is locating-dominating in $H_{k}$. Moreover,
$\lambda(H)=\lambda(H_{k})$.
\end{Lemma}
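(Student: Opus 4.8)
The plan is to reduce the entire statement to a single claim about open neighborhoods: that $N_H(v)=N_{H_k}(v)$ for every vertex $v$, where the two neighborhoods are computed in $H$ and in the $k$-section $H_k$, respectively. This suffices because $V(H)=V(H_k)$ by definition of $H_k$, and the defining condition for a locating-dominating set (for every two distinct $u,v\in V(H)\setminus S$, $\emptyset\neq N(u)\cap S\neq N(v)\cap S\neq\emptyset$) refers only to the sets $N(\cdot)\cap S$ of vertices outside $S$. Hence, once the open neighborhoods agree in $H$ and $H_k$, a set $S$ satisfies this condition in $H$ exactly when it satisfies it in $H_k$, which gives the ``if and only if''. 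Since the two hypergraphs then possess precisely the same family of locating-dominating sets, their minimum cardinalities coincide, which is the moreover part $\lambda(H)=\lambda(H_k)$. Throughout I take $k\geq 2$, which is what makes the $k$-section preserve coincidences (for $k=1$ every hyperedge of $H_k$ is a singleton and all neighborhoods collapse to $\emptyset$).

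First I would establish the easy inclusion $N_{H_k}(v)\subseteq N_H(v)$. Recall that two vertices are neighbors precisely when they are coincident, i.e.\ lie in a common hyperedge. If $u\in N_{H_k}(v)$, there is an edge $X\in E_k$ containing both $u$ and $v$. By the definition of the $k$-section, $X$ either equals some $E\in E(H)$ (condition (1)) or satisfies $X\subseteq E_j$ for some $E_j\in E(H)$ (condition (2)); in either situation $u$ and $v$ lie in a common hyperedge of $H$, so $u\in N_H(v)$.

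The reverse inclusion $N_H(v)\subseteq N_{H_k}(v)$ is the crux. Suppose $u\in N_H(v)$, so $u,v\in E$ for some $E\in E(H)$. If $|E|\leq k$, then $E\in E_k$ by condition (1) and $u,v$ remain coincident in $H_k$. If $|E|>k$, then since $k\geq 2$ and $u\neq v$ I can extend $\{u,v\}$ to a set $X\subseteq E$ with $|X|=k$ by adjoining $k-2$ further vertices of $E$ (possible because $|E|>k$ forces $|E\setminus\{u,v\}|\geq k-2$); then $X\subseteq E$ with $|X|=k$ places $X\in E_k$ by condition (2), and again $u,v$ are coincident in $H_k$. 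Hence $u\in N_{H_k}(v)$.

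Combining the two inclusions yields $N_H(v)=N_{H_k}(v)$ for every $v$, and the reduction described in the first paragraph then finishes the proof. The only delicate point is the large-edge case of the reverse inclusion: one must verify that collapsing an oversized hyperedge to its $k$-element subsets never destroys a coincidence, and this is exactly guaranteed by clause (2), which admits \emph{every} $k$-subset of each hyperedge, so a $k$-subset containing any prescribed pair is always available. I do not anticipate any subtlety in the final passage from equality of neighborhoods to equality of the location-domination numbers, since it is immediate once the two families of locating-dominating sets have been identified.
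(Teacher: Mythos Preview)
Your argument is correct and follows the same core idea as the paper: the $k$-section preserves neighborhoods, so the locating-dominating condition reads identically in $H$ and $H_k$. The paper's proof is much terser---it only asserts that $N[u]=N[v]$ in $H$ iff $N[u]=N[v]$ in $H_k$ and then invokes Lemma~\ref{lem2.4}---whereas you prove the stronger pointwise equality $N_H(v)=N_{H_k}(v)$ and apply the definition directly, which is cleaner and actually complete (Lemma~\ref{lem2.4} alone gives only a necessary condition, not a characterization). You also correctly flag that the case $k=1$ must be excluded, a restriction the paper's statement omits; for $k=1$ every hyperedge of $H_1$ is a singleton, all open neighborhoods collapse to $\emptyset$, and $\lambda(H_1)=|V(H)|$ in general, so the lemma as stated fails there.
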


\begin{proof}
Let $S_{1}$ and $S_{2}$ be two locating-dominating sets for $H$ and
$H_{k}$, respectively. Let $N[u]$ and $N[v]$ be closed neighborhoods
of $u,v\in V(H)$. By the definition of $k$-section hypergraph, we note that
$N[u] = N[v]$ in $H$ if and only if $N[u]=N[v]$ in $H_{k}$. Hence, by
Lemma \ref{lem2.4}, $S_{1}$ and $S_{2}$ are the same sets.
\end{proof}

For any vertex $v\in V(H)$, let $E(v)=\{E_{i}\in E(H)\ |\ v\in E_{i}\}$. Then the
{\it edge degree} of $v$ is $d_{e}(v)=|E(v)|$. If $d_{e}(v)=1$, then we say that $v$ is a {\it pendant
vertex}.

The {\it natural partition} in a hypergraph $H$ is a partition
$P=\{P_{1}, P_{2},\ldots, P_{t}\}$ of $V(H)$ such that for
every pair $u,v\in V(H)$, $u,v\in P_{i}$ if and only if $E(u)=E(v)$. The elements $P_{i}\in P$ are called the {\it levels}
of $H$. A hypergraph $H_{L}=(V(H_L),E(H_L))$ is called the {\it level hypergraph} of $H$ if we
delete every vertex except one from each level of $H$.

\begin{Example}\label{exp2.25}
Let $H$ be the hypergraph with $V(H)=\{v_{1},v_{2},\ldots,v_{10}\}$ and $E(H)=\{E_{1},
E_{2},E_{3}\}$, where $E_{1}=\{v_{1},v_{2},v_{3},v_{4}\}$,
$E_{2}=\{v_{4},v_{5},v_{6}\}$ and
$E_{3}=\{v_{6},v_{7},v_{8},v_{9},v_{10}\}$. Then the collection $P=\{P_{1},P_{2},P_{3},P_{4},P_{5}\}$ with
$P_{1}=\{v_{1},v_{2},v_{3}\}$, $P_{2}=\{v_{4}\}$, $P_{3}=\{v_{5}\}$,
$P_{4}=\{v_{6}\}$, and $P_{5}=\{v_{7},v_{8},v_{9},v_{10}\}$ is
called the natural partition of $H$. The elements $P_{i}\in P$ are
called the levels of $H$. The level hypergraph of $H$ is
the graph with vertex set
$V(H_{L})=\{v_{1},v_{4},v_{5},v_{6},v_{7}\}$ and the edge set
$E(H_{L})=\{E_{1}', E_{2}',E_{3}'\}$ with
$E_{1}'=\{v_{1},v_{4}\}$, $E_{2}'=\{v_{4},v_{5},v_{6}\}$ and
$E_{3}'=\{v_{6},v_{7}\}$.
\end{Example}


\begin{Theorem}\label{th2.26}
If $H_{L}$ is the level hypergraph of a hypertree $H$, then the set
$$ S=\{v\in V(H_L)\ |\ E'(v) = \{E'_i\}\ \mbox{in}\ H_L,\ \mbox{where}\ E'_i\in E(H_L)\}$$ is a
locating-dominating set for $H_{L}$.
\end{Theorem}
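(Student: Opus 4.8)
The plan is to translate the definition of $S$ into familiar terms and then check the two defining properties of a locating-dominating set separately. Note first that the condition $E'(v)=\{E'_i\}$ just says that $v$ is a pendant vertex of $H_L$ (edge degree one), so $S$ is precisely the set of pendant vertices and $V(H_L)\setminus S$ is the set of vertices of edge degree at least two. I would record two structural facts before starting. First, by the construction of the level hypergraph (one vertex retained per level, and levels are exactly the classes of the relation $E(u)=E(v)$), distinct vertices of $H_L$ have distinct edge-sets $E'(\cdot)$. Second, the level map preserves both non-empty intersections and non-containment of hyperedges, so $H_L$ is again a connected Sperner hypertree; in particular a pendant vertex lies in a unique hyperedge, and each hyperedge of $H_L$ contains at most one pendant vertex, since all pendants of a fixed edge share the same level.

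Next I would dispose of the locating condition, which I expect to be the easy half. For a non-pendant vertex $v$, a pendant vertex $p$ lies in $N(v)\cap S$ if and only if the unique hyperedge containing $p$ also contains $v$; hence $N(v)\cap S$ records exactly which of the ``pendant-owning'' hyperedges contain $v$. Given two distinct non-pendant vertices $u,v$, the first structural fact gives $E'(u)\neq E'(v)$, so some hyperedge lies in the symmetric difference of their edge-sets, and if that hyperedge owns a pendant vertex $p$ then $p$ distinguishes $u$ and $v$ because $p$ has no other incident edge. Thus the locating property reduces to the assertion that the hyperedges witnessing $E'(u)\neq E'(v)$ can be taken to contain pendant vertices, which is the same kind of statement needed for domination.

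The crux, and what I expect to be the main obstacle, is the dominating condition: every non-pendant vertex $v$ must be incident with at least one hyperedge that contains a pendant vertex. My approach would be through leaf hyperedges of the hypertree $H_L$: a hyperedge meeting exactly one other hyperedge must, by the Sperner property, contain a vertex private to it, hence a pendant vertex, so such boundary edges are automatically pendant-owning (and I would treat leaf-free configurations such as a Sperner hyperstar separately, where every edge has a private vertex anyway). I would then try to promote this boundary information inward, either by induction on the number of hyperedges, deleting a leaf edge, applying the inductive hypothesis, and restoring it, or by an extremal argument that walks from $v$ through ``bad'' (pendant-free) edges along the acyclic structure until the walk is forced to terminate at a leaf. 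The delicate point is that an interior non-pendant vertex may sit only in ``thin'' edges, each shared on both sides; I must use acyclicity to exclude the possibility that \emph{every} edge through $v$ is pendant-free, since a walk through such edges cannot close up into a hypercycle. Making that termination argument airtight, and in particular verifying that no vertex has all of its incident edges pendant-free, is where the real work lies.
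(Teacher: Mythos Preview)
Your diagnosis of the crux is exactly right, and in fact sharper than the paper's own argument. The paper's proof merely observes that for distinct non-pendant $u,v\in V(H_L)\setminus S$ there is some hyperedge $E_i'$ containing $u$ but not $v$ (because distinct vertices of a level hypergraph have distinct edge-sets), and then declares the theorem proved. It never checks that $E_i'$ contains a pendant vertex, and it never verifies domination at all. You correctly isolate both of these as the real content.

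The trouble is that the ``termination argument'' you sketch cannot be made airtight, because the statement as written is false. Let $H$ be the ordinary path $P_6$ on vertices $v_1,\dots,v_6$; this is a $2$-uniform connected Sperner hypertree. Every vertex has its own level, so $H_L=P_6$ and $S=\{v_1,v_6\}$. The interior vertex $v_3$ lies only in the edges $\{v_2,v_3\}$ and $\{v_3,v_4\}$, neither of which contains a pendant, so $N(v_3)\cap S=\emptyset$ and $S$ is not even dominating. Your extremal walk from $v_3$ through pendant-free edges does eventually reach a leaf edge, but that leaf edge does not contain $v_3$; the walk has moved to a \emph{different} vertex, and the pendant it finds there is not a neighbour of $v_3$. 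The same obstruction defeats the inductive plan: deleting a leaf edge and restoring it does nothing for interior vertices already surrounded by pendant-free edges.

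So the gap you flagged is genuine and unclosable without an extra hypothesis---for instance, that every hyperedge of $H_L$ contains a vertex of edge degree one, which is what silently makes the paper's worked example go through. Your structural observations (distinct edge-sets in $H_L$, at most one pendant per edge) are correct and would be the right scaffolding for a proof under such a strengthened hypothesis.
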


\begin{proof}
Let $u,v\in V(H_{L})-S$. Then, by the definitions of the level hypergraph $H_{L}$ and the set $S$,  $u$ and
$v$ are not pendant vertices. Note that, $N(u)\cap S= \{u'\in
\bigcup\limits_{r=1}^{l_{1}}E_{i_{r}}'\ |\ u'\, \mbox{is\, a\,
pendant\, vertex} \}$ and $N(v)\cap S= \{v'\in
\bigcup\limits_{s=1}^{l_{2}} E_{i_{s}}'\ |\ v'\, \mbox {is\, a\,
pendant\, vertex }\}$, where $E_{i_{1}}', E_{i_{2}}',\ldots,
E_{i_{l_{1}}}'$ and $E_{i_{1}}', E_{i_{2}}',\ldots, E_{i_{l_{2}}}'$
with $2\leq l_{1},l_{2}\leq m$ are the hyperedges such that $u\in
\bigcap\limits_{r=1}^{l_{1}} E_{i_{r}}'$ and $v\in
\bigcap\limits_{s=1}^{l_{2}} E_{i_{s}}'$. Since $H_{L}$ is the level
hypergraph, therefore there is at least one hyperedge $E_{i}'\in
E(H_{L})$ such that $u\in E_{i}'$ but $v\not \in E_{i}'$, which
proves the theorem.
\end{proof}

The \emph{primal graph}, $prim(H)$, of a hypergraph $H$ is a graph
with vertex set $V(H)$ and vertices $x$ and $y$ of $prim(H)$ are
adjacent if and only if $x$ and $y$ are contained in a hyperedge.
The {\it middle graph}, $M(H)$, of $H$ is a subgraph of $prim(H)$
formed by deleting all loops and parallel edges. The $dual$ of $H =
(\{v_1,v_2,\ldots, v_m\}, \{E_1,E_2,\ldots,\\ E_k\})$, denoted by
$H^*$, is the hypergraph whose vertices are $\{e_1,e_2,\ldots,
e_k\}$ corresponding to the hyperedges of $H$ and with hyperedges
$V_i = \{e_j\ :\ v_i\in E_j\ \mbox{in}\ H\}$, where $i = 1
,2,\ldots, m$. In other words, the dual $H^*$ swaps the vertices and
hyperedges of $H$.

From the definition of the primal graph of a hypergraph $H$, note that,
closed neighborhood of any vertex in $H$ is same as in $prim(H)$.
Thus, from Lemma \ref{lem2.4}, we have the following straightforward
result:

\begin{Theorem}\label{th2.27}
Let $H$ be a hypergraph and $prim(H)$ be the primal graph of $H$.
Then $\lambda(H) = \lambda(prim(H)) = \lambda(M(H))$.
\end{Theorem}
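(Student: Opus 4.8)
The plan is to prove Theorem \ref{th2.27} by showing that the closed neighborhood relation is identical in $H$, $prim(H)$, and $M(H)$, and then invoking the characterization of locating-dominating sets through closed neighborhoods established in Lemma \ref{lem2.4}. The essential observation, already flagged in the paragraph preceding the statement, is that for any vertex $v \in V(H)$, the closed neighborhood $N[v]$ computed in $H$ coincides with the closed neighborhood computed in $prim(H)$. Indeed, by the definition of the primal graph, two vertices $x$ and $y$ are adjacent in $prim(H)$ precisely when they are coincident in $H$ (contained in a common hyperedge), which is exactly the adjacency notion underlying $N(v)$ in the hypergraph. First I would record this identity carefully: $N_H[v] = N_{prim(H)}[v]$ for every $v$.

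Next I would handle the passage from $prim(H)$ to $M(H)$. The middle graph is obtained from $prim(H)$ by deleting loops and parallel (multiple) edges. Since the open neighborhood $N(v)$ is a \emph{set} of vertices, collapsing parallel edges to a single edge and removing loops does not alter which vertices are adjacent to $v$; hence $N_{prim(H)}[v] = N_{M(H)}[v]$ for every $v$. Combining the two identities gives $N_H[v] = N_{prim(H)}[v] = N_{M(H)}[v]$ on the common vertex set $V(H)$.

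With the closed-neighborhood equality in hand, the conclusion is a direct transfer. A set $S \subseteq V(H)$ is locating-dominating in a (hyper)graph exactly when for every two distinct vertices outside $S$ the open neighborhoods intersected with $S$ are nonempty and distinct; since open neighborhoods agree across $H$, $prim(H)$, and $M(H)$, the set $S$ is simultaneously locating-dominating in all three or in none. Because this equivalence holds for every $S$, it holds in particular for minimum such sets, so the three minimum cardinalities coincide, yielding $\lambda(H) = \lambda(prim(H)) = \lambda(M(H))$. I would phrase this last step either directly through the open-neighborhood condition or, as the surrounding text suggests, by appealing to Lemma \ref{lem2.4} together with the fact that $N[u]=N[v]$ holds in one structure if and only if it holds in the others.

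I do not anticipate a genuine obstacle here, as the result is essentially definitional once the neighborhood identities are recorded; the theorem is labeled ``straightforward'' in the text. The only point requiring mild care is the direction of the biconditional for locating-dominating sets: one must verify that the condition $\emptyset \neq N(u)\cap S \neq N(v)\cap S \neq \emptyset$ is preserved, not merely the domination part, and this follows because the full open neighborhood $N(v)$, and hence its trace $N(v)\cap S$ on any $S$, is unchanged between the three structures. I would state the neighborhood identity as the single lemma-like observation carrying the whole argument, and keep the remainder to one or two sentences.
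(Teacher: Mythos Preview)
Your proposal is correct and follows the same approach as the paper: observe that closed (and hence open) neighborhoods coincide in $H$, $prim(H)$, and $M(H)$, and conclude via the neighborhood characterization of locating-dominating sets (the paper invokes Lemma \ref{lem2.4}). Your write-up is in fact more explicit than the paper's, which treats the result as immediate from the preceding sentence and gives no formal proof.
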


The primal graph of the dual $H^*$ of a hypergraph $H$ is not a
simple graph. In this case, the middle graph of $H^*$ is a simple
graph. By using the same argument as above, we have the following
result:

\begin{Theorem}\label{th2.28}
Let $H^*$ be the dual of a hypergraph $H$ and $M(H^*)$ be the middle
graph of $H^*$. Then $\lambda(H^*) = \lambda(M(H^*))$.
\end{Theorem}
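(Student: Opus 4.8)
The plan is to transplant the argument of Theorem~\ref{th2.27} to the hypergraph $H^*$ and its middle graph $M(H^*)$. The conceptual engine behind Theorem~\ref{th2.27} is that the locating-dominating condition for a hypergraph is governed entirely by the traces $N(u)\cap S$ of open neighborhoods, and that these neighborhoods, viewed as vertex sets, are unchanged when one passes to the associated simple graph. So first I would record that $H^*$, $prim(H^*)$ and $M(H^*)$ all carry the same vertex set $\{e_1,e_2,\ldots,e_k\}$, the vertices corresponding to the hyperedges of $H$.

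Next I would establish the key neighborhood identity. In $H^*$ two vertices $e_i$ and $e_j$ are coincident exactly when some hyperedge $V_l$ of $H^*$ contains both; by the definition of the dual, $V_l=\{e_j : v_l\in E_j\}$, so $e_i,e_j\in V_l$ means $v_l\in E_i\cap E_j$ in $H$. This is precisely the condition under which $e_i$ and $e_j$ are adjacent in $prim(H^*)$. The passage from $prim(H^*)$ to $M(H^*)$ deletes only loops (which arise from singleton hyperedges $V_l=\{e_i\}$ and concern self-incidence alone) and parallel edges (arising when two dual-vertices lie in several common hyperedges), and neither operation changes whether two \emph{distinct} vertices are adjacent. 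Hence for every $e_i$ the open neighborhood $N(e_i)$ is the same set in $H^*$ as in $M(H^*)$.

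From this identity the theorem follows quickly. For any $S\subseteq\{e_1,\ldots,e_k\}$ and any vertex $e_i$, the trace $N(e_i)\cap S$ is the same computed in $H^*$ or in $M(H^*)$; consequently $S$ satisfies $\emptyset\neq N(u)\cap S\neq N(v)\cap S\neq\emptyset$ for every pair of distinct $u,v$ outside $S$ in $H^*$ if and only if it does so in $M(H^*)$. Thus $S$ is locating-dominating in one exactly when it is in the other, and taking minimum cardinalities gives $\lambda(H^*)=\lambda(M(H^*))$. Lemma~\ref{lem2.4} can be invoked, exactly as in Theorem~\ref{th2.27}, to account for vertices with equal closed neighborhoods that are forced into every such set; this merely reflects the same trace identity at the level of closed neighborhoods. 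Note that I would route the comparison through $M(H^*)$ directly and use $prim(H^*)$ only as a conceptual bridge for the adjacency relation, since $prim(H^*)$ need not be simple.

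The only real obstacle I anticipate is the bookkeeping around loops and parallel edges. One must verify carefully that deleting a loop at $e_j$ removes $e_j$ from no other vertex's neighborhood (it cannot, since such a loop is incident to $e_j$ alone, and $e_j\notin N(e_j)$ in any case) and that collapsing the parallel edges between $e_i$ and $e_j$ leaves the single retained edge, so the two remain adjacent. Once these routine checks confirm that the open-neighborhood traces coincide, the equality of the location-domination numbers is immediate.
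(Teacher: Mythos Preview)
Your proposal is correct and follows essentially the same route as the paper, which dispatches Theorem~\ref{th2.28} with the single remark ``By using the same argument as above''---i.e., the observation underlying Theorem~\ref{th2.27} that passing from a hypergraph to its primal (and thence to its middle) graph preserves open neighborhoods and hence locating-dominating sets. Your more careful handling of loops and parallel edges simply makes explicit what the paper leaves implicit.
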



\begin{Theorem}\label{th2.29}\cite{cac}
Let $G$ be a graph of order $n\geq 2$. Then $\lambda(G)=1$ if and
only if $G \cong P_{2}$.
\end{Theorem}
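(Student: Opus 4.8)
The plan is to analyze directly what a single-vertex locating-dominating set can and cannot accomplish. Suppose $\lambda(G)=1$ and let $S=\{x\}$ be a minimum locating-dominating set. The crucial observation I would make at the outset is that for any vertex $u\in V(G)-S$, the intersection $N(u)\cap S$ can take only the two values $\emptyset$ and $\{x\}$. First I would invoke the domination requirement built into the definition, namely $N(u)\cap S\neq\emptyset$ for every $u\notin S$; this forces $x\in N(u)$, so every vertex other than $x$ is adjacent to $x$, and consequently $N(u)\cap S=\{x\}$ for all such $u$.

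Next I would apply the locating requirement. If $V(G)-S$ contained two distinct vertices $u$ and $v$, then by the previous step $N(u)\cap S=\{x\}=N(v)\cap S$, contradicting the condition $N(u)\cap S\neq N(v)\cap S$. Hence $|V(G)-S|\leq 1$, and since $n\geq 2$ we conclude $n=2$. Writing $V(G)=\{x,u\}$, the domination step has already established the edge $xu$, so $G$ consists of exactly two vertices joined by an edge, i.e. $G\cong P_{2}$.

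For the converse, I would simply verify that $S=\{x\}$ works when $G\cong P_{2}$ with $V(G)=\{x,y\}$: the unique vertex outside $S$ is $y$, and $N(y)\cap S=\{x\}\neq\emptyset$, so the domination part holds, while the locating part is vacuously satisfied because no two distinct vertices lie outside $S$. Thus $\{x\}$ is a locating-dominating set, giving $\lambda(G)\leq 1$; combined with $\lambda(G)\geq 1$ (the empty set is never locating-dominating once $n\geq 2$, since then some vertex $u$ has $N(u)\cap\emptyset=\emptyset$) this yields $\lambda(G)=1$.

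I expect no serious obstacle in this argument; the only point demanding care is the initial reduction that, for a singleton $S$, the set $N(u)\cap S$ is either $\emptyset$ or $S$ itself, which is what makes the locating condition collapse to ``at most one vertex outside $S$'' as soon as domination is imposed. Everything else is a short counting observation together with the vacuous-case check in the converse.
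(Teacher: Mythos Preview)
Your argument is correct. The only thing to note is that the paper does not supply its own proof of this theorem: the statement is quoted verbatim from \cite{cac} and then used as input to the hypergraph analogue (Theorem~\ref{th2.30}). So there is no in-paper proof to compare against; your write-up in fact fills in the details the paper omits, via the natural observation that a singleton $S=\{x\}$ forces $N(u)\cap S=\{x\}$ for every dominated $u$, collapsing the locating condition to $|V(G)\setminus S|\le 1$.
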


\begin{Theorem}\label{th2.30}
Let $H$ be a hypergraph. Then $\lambda(H)=1$ if and
only if $H \cong P_{2}$, where $P_2$ is a 2-uniform linear hyperpath with one hyperedge.
\end{Theorem}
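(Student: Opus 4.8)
The plan is to establish the two implications separately, leveraging the reduction to simple graphs provided by Theorem \ref{th2.27} together with the known graph classification in Theorem \ref{th2.29}. For the forward direction I would argue directly, while for the converse I would pass to the middle graph $M(H)$, invoke the graph result, and then reconstruct $H$ from $M(H)$ using the standing hypotheses that $H$ is connected and Sperner.

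\emph{Forward direction.} Suppose $H \cong P_2$, so that $V(H) = \{v_1, v_2\}$ and $E(H) = \{\{v_1, v_2\}\}$. Take $S = \{v_1\}$. The only vertex of $V(H) - S$ is $v_2$, and since $v_1$ and $v_2$ lie in the common hyperedge we have $v_1 \in N(v_2)$, whence $N(v_2) \cap S = \{v_1\} \neq \emptyset$; the locating condition is vacuous because $|V(H) - S| = 1$. Thus $S$ is a locating-dominating set and $\lambda(H) \leq 1$, and since every locating-dominating set is nonempty we conclude $\lambda(H) = 1$.

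\emph{Converse.} Assume $\lambda(H) = 1$, where we may take $|V(H)| \geq 2$ so that Theorem \ref{th2.29} applies. By Theorem \ref{th2.27}, $\lambda(M(H)) = \lambda(H) = 1$, where $M(H)$ is a simple graph on the vertex set $V(H)$. Theorem \ref{th2.29} then forces $M(H) \cong P_2$, so $|V(H)| = |V(M(H))| = 2$, say $V(H) = \{v_1, v_2\}$. It remains to identify $E(H)$: since $H$ is connected, there is a path joining $v_1$ and $v_2$, which requires a hyperedge containing both, so $\{v_1, v_2\} \in E(H)$. Because $H$ is Sperner, neither singleton $\{v_1\}$ nor $\{v_2\}$ can be a hyperedge (each is a proper subset of $\{v_1, v_2\}$), and hence $E(H) = \{\{v_1, v_2\}\}$, i.e.\ $H \cong P_2$.

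I expect the only real subtlety to lie in the converse's final reconstruction step, namely translating ``$M(H) \cong P_2$'' back into a statement about $H$. The key points are: (i) forming $M(H)$ deletes edges but never vertices, so $V(H) = V(M(H))$ and thus $|V(H)| = 2$; (ii) connectedness is precisely what guarantees that the full set $\{v_1, v_2\}$ occurs as a hyperedge rather than only isolated singletons; and (iii) the Sperner property eliminates the remaining candidate edge sets. One should also flag the harmless degenerate case $|V(H)| = 1$, which I exclude by assuming order at least two, consistent with the hypothesis of Theorem \ref{th2.29}.
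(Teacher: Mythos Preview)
Your proof is correct, but the converse takes a genuinely different route from the paper. The paper argues directly: if $S=\{v\}$ is a locating-dominating set and there were two vertices $v_1,v_2\in V(H)-S$, then domination forces $N(v_1)\cap S=N(v_2)\cap S=\{v\}$, contradicting the locating property; hence $|V(H)|\le 2$ and $H\cong P_2$. Your argument instead passes through Theorem~\ref{th2.27} to transfer the problem to the simple graph $M(H)$, invokes Theorem~\ref{th2.29} there, and then reconstructs $H$ from $M(H)\cong P_2$ using connectedness and the Sperner hypothesis. The paper's approach is self-contained and avoids the middle-graph machinery, but it is terse and does not spell out why $|V(H)|=2$ forces $E(H)=\{\{v_1,v_2\}\}$; your approach is more modular, reuses existing results cleanly, and your explicit reconstruction step (together with flagging the degenerate $|V(H)|=1$ case) fills in details the paper leaves implicit.
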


\begin{proof}
Suppose that $H\cong P_{2}$. Then, by Theorem \ref{th2.29}, $\lambda(H)=1$.

Conversely, suppose that $\lambda(H)=1$ and $S=\{v\}$ is a locating-dominating set for $H$. Then $H\cong P_{2}$ otherwise, there exist $v_{1}, v_{2}\in V(H)- S$ such that $N(v_{1})\cap S = N(v_{2})\cap S$.
\end{proof}

\begin{Remark}\label{rem2.31}
Given a hypergraph $H$, the hypergraph resulting from deleting the
repeated and non-maximal edges is denoted by $H^{r}_{nm}$. Then for
any hypergraph $H$, $\lambda(H)$=$\lambda (H^{r}_{nm})$ since the
considered graphs are Sperner.
\end{Remark}

From Theorem \ref{th2.30} and Remark \ref{rem2.31}, it follows that for every
hypergraph $H$, $\lambda (H^{r}_{nm})=1$ if and only if $\lambda
(H^{r}_{nm})\cong P_{2}$, where $P_2$ is a 2-uniform linear hyperpath with one hyperedge.

\end{document}